\theoremstyle{plain}
\newtheorem{thm}{Theorem}[section]
\newtheorem{lemma}[thm]{Lemma}
\newtheorem{prop}[thm]{Proposition}
\newtheorem{cor}[thm]{Corollary}
\newtheorem*{thm*}{Theorem}
\newtheorem*{lemma*}{Lemma}
\newtheorem*{prop*}{Proposition}
\newtheorem*{cor*}{Corollary}
\newtheorem*{conj*}{Conjecture}
\theoremstyle{definition}
\newtheorem{defn}[thm]{Definition}
\theoremstyle{remark}
\numberwithin{equation}{section}
\begin{document}

\date{}

\title{ON NILPOTENT LIE ALGEBRAS OF SMALL BREADTH}

\author{Borworn Khuhirun, Kailash C. Misra and Ernie Stitzinger}
\address{Department of Mathematics, North Carolina State University, Raleigh, NC 27695-8205}
\email{duomaxwelldv@hotmail.com, misra@ncsu.edu, stitz@ncsu.edu}
\thanks{KCM is partially supported by NSA grant \#  H98230-12-1-0248 and Simons Foundation grant \#  307555}
\begin{abstract}
A Lie algebra $L$ is said to be of breadth $k$ if the maximal dimension of the images of left multiplication by elements of the algebra is $k$. In this paper we give characterization of finite dimensional nilpotent Lie algebras of breadth less than or equal to two. Furthermore, using these characterizations we determined the isomorphism classes of these algebras.
\end{abstract}

\maketitle
\bigskip
\section{Introduction}
Classification of algebraic objects is a central theme of mathematical research. The classification of finite dimensional complex simple Lie algebras due to Killing and Cartan is well known (cf. \cite{H}). However, due to the existence of the vast number of finite dimensional nilpotent Lie algebras the classification problem has been formidable for this class. As a result, many researchers have made progress by classifying nilpotent Lie algebras satisfying certain conditions. Research in finite group theory has followed a similar path. Simple groups have been classified, but the large number of finite $p$-groups has led researchers to investigate $p$-groups with added conditions. One such condition is the breadth of a finite $p$-group (\cite{N},\cite{V}). The breadth $b(G)$ of a finite $p$-group $G$ is defined as  the size of the largest conjugacy class in $G$. Analogously the breadth $b(L)$ of a Lie algebra $L$ is defined to be the maximum of the dimensions of the images of ${\rm ad}_x$ for all $x \in L$. In \cite{V}, Vaughn-Lee settled a long standing conjecture by showing that for a finite $p$-group $G$ of breadth $b=b(G)$,  $|G^2| \leq p^(b(b+1)/2)$. He also showed that for a  nilpotent Lie algebra $L$ of breadth $b=b(L)$, ${\rm dim}[L, L] \leq  b(b+1)/2$. More recently, Parmeggiani and Stellmacher \cite{PS}  gave characterizations of finite $p$-groups of breadth $1$ and $2$. However, so far there does not  exist a classification of these finite $p$-groups. 

In this paper we give  characterizations for nilpotent Lie algebras of breadth $1$ and $2$.  In particular, we show that a finite dimensional nilpotent Lie algebra is of breadth $1$ if and only if its derived algebra is one dimensional.  We also show that a finite dimensional nilpotent Lie algebra $L$  has breadth $2$ if and only if either the derived algebra of $L$ has dimension $2$ or the derived algebra and the central quotient both have dimension $3$. These results parallel results in finite $p$-groups.

Finally we use our characterizations to classify finite dimensional nilpotent Lie algebras of breadth $1$ and $2$. We define a Lie algebra to be {\it pure} if it does not have abelian ideals as direct summands. Then we classify finite dimensional pure nilpotent Lie algebras of breadth one and two since abelian summands can be added harmlessly. In particular, we show that a finite dimensional pure nilpotent Lie algebra of breadth $1$ is isomorphic to a Heisenberg Lie algebra. For a finite dimensional pure nilpotent Lie algebras $L$, the center is contained in the derived algebra.  By our characterization result, the dimension of  the derived algebra of a finite dimensional pure nilpotent Lie algebra $L$ of breadth $2$ is either $2$ or $3$. We determine the isomorphism classes of finite dimensional pure nilpotent Lie algebras of breadth two with three dimensional derived algebra. We also determine the isomorphism classes of finite dimensional pure nilpotent Lie algebras of breadth two with two dimensional derived algebra and one dimensional center. For the remaining case where the derived algebra and center have dimension $2$ each, we determine their isomorphism classes up to dimension 6. We hope that these classification results will lead to corresponding classification results in finite $p$-groups.

\section{Basic properties of breadth for Lie algebras}
Let $L$ be a finite dimensional Lie algebra over field $F$ and $A$ be an ideal of $L$. In this paper we assume ${\rm char}(F) \neq 2$. For any $x \in L$ we define $b_A(x) = {\rm rank}({\rm ad}_x|_A)$ and $b_A(L) = {\max}\{b_A(x) \mid x \in L\}$. Clearly $b_A(L) \leq {\rm dim}[A,L]$. We define the ${\it breadth}$ of $L$ to be $b(L) = b_L(L)$. We denote $b(x) = b_L(x)$ for all $x \in L$. Clearly $b_A(L) \leq b(L)$. It follows from the definition that $L$ is abelian if and only if $b(L) =0$. Let $Z(L)$ denote the center of $L$. The following results are easy to prove.
\begin{prop}\label{lowerbound}
${\rm dim}(L/Z(L)) \geq b(L)+1$. 
\end {prop}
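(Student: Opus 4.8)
The plan is to pick an element that realizes the breadth and then apply the rank--nullity theorem to its adjoint operator. Concretely, choose $x \in L$ with $b(L) = b(x) = \mathrm{rank}(\mathrm{ad}_x)$, so that $\dim(\mathrm{im}\,\mathrm{ad}_x) = \dim[x,L] = b(L)$. Since $\mathrm{ad}_x : L \to L$ is a linear map, rank--nullity yields $\dim L = \mathrm{rank}(\mathrm{ad}_x) + \dim\ker(\mathrm{ad}_x) = b(L) + \dim\ker(\mathrm{ad}_x)$. The whole argument then reduces to bounding $\dim\ker(\mathrm{ad}_x)$ from below in terms of $\dim Z(L)$.

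The key observation is that $\ker(\mathrm{ad}_x) = C_L(x) = \{y \in L : [x,y]=0\}$ is the centralizer of $x$, and this centralizer strictly contains the center. Indeed, for every $z \in Z(L)$ we have $[x,z]=0$, so $Z(L) \subseteq \ker(\mathrm{ad}_x)$; moreover $x$ itself lies in $\ker(\mathrm{ad}_x)$ because $[x,x]=0$. First I would establish that $x \notin Z(L)$: assuming $L$ is non-abelian (equivalently $b(L)\ge 1$), the element realizing the breadth must have $\mathrm{ad}_x \neq 0$, and hence $x \notin Z(L)$. Consequently $Z(L) + Fx$ has dimension $\dim Z(L)+1$ and is contained in $\ker(\mathrm{ad}_x)$, giving $\dim\ker(\mathrm{ad}_x) \geq \dim Z(L)+1$.

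Combining the two displays, $\dim L = b(L) + \dim\ker(\mathrm{ad}_x) \geq b(L) + \dim Z(L) + 1$, which rearranges immediately to $\dim(L/Z(L)) = \dim L - \dim Z(L) \geq b(L)+1$, as claimed.

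The one point demanding care is the \emph{strictness} of the inclusion $Z(L) \subsetneq C_L(x)$, i.e.\ ensuring that adjoining $x$ to the center genuinely increases the dimension by one. This is precisely where the hypothesis that $L$ is non-abelian (equivalently $b(L) \geq 1$) is used, since it guarantees $\mathrm{ad}_x \neq 0$ and hence $x \notin Z(L)$. For abelian $L$ the inequality reads $0 \geq 1$ and is vacuous, so the content lies entirely in the non-abelian case; I would state this restriction explicitly at the start of the proof to keep the rank-counting clean.
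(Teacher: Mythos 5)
Your proof is correct. The paper offers no proof of this proposition (it is listed among results that are ``easy to prove''), and your rank--nullity argument --- $\dim L = b(L) + \dim\ker(\mathrm{ad}_x)$ for $x$ realizing the breadth, combined with $Z(L) + Fx \subseteq \ker(\mathrm{ad}_x)$, which forces $\dim\ker(\mathrm{ad}_x) \geq \dim Z(L) + 1$ --- is the standard argument that was surely intended. One small correction to your closing remark: for abelian $L$ the inequality reads $0 \geq 1$, which is \emph{false} rather than vacuous, so the restriction to non-abelian $L$ (equivalently $b(L) \geq 1$) that you propose to state explicitly is genuinely required for the statement to hold at all; this is harmless in context, since the paper only invokes the proposition when $b(L) \geq 2$.
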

\begin{prop}\label{directsum} Let $L = L_1 \oplus L_2$ be a finite dimensional direct sum Lie algebra. Then $b(L) = b(L_1) + b(L_2)$.
\end {prop}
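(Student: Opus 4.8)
The plan is to exploit the fact that in a Lie-algebra direct sum the two summands commute, so that ${\rm ad}_x$ decomposes into two independent blocks. First I would record the structural input: writing $L = L_1 \oplus L_2$ as Lie algebras means that $L_1$ and $L_2$ are ideals with $L = L_1 + L_2$ as vector spaces and $[L_1, L_2] = 0$. Consequently, for any $x = x_1 + x_2$ with $x_i \in L_i$ and any $y = y_1 + y_2$, the cross brackets vanish and $[x,y] = [x_1, y_1] + [x_2, y_2]$, so that ${\rm ad}_x$ preserves each summand $L_i$ and acts there precisely as ${\rm ad}_{x_i}$.

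Second, I would deduce the rank identity for a single element. Since ${\rm ad}_x$ is block diagonal with respect to the decomposition $L = L_1 \oplus L_2$, its image is the internal direct sum ${\rm im}({\rm ad}_{x_1}|_{L_1}) \oplus {\rm im}({\rm ad}_{x_2}|_{L_2})$ inside $L_1 \oplus L_2$. Taking dimensions gives
\[
b(x) = b_{L_1}(x_1) + b_{L_2}(x_2),
\]
where $b_{L_i}(x_i) = {\rm rank}({\rm ad}_{x_i}|_{L_i})$ is exactly the breadth of $x_i$ regarded as an element of the Lie algebra $L_i$.

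Third, I would take the maximum over all $x \in L$. Because $x_1$ and $x_2$ range independently over $L_1$ and $L_2$, the maximum of the separable sum factors as the sum of the maxima:
\[
b(L) = \max_{x_1 \in L_1,\, x_2 \in L_2}\bigl(b_{L_1}(x_1) + b_{L_2}(x_2)\bigr) = \max_{x_1 \in L_1} b_{L_1}(x_1) + \max_{x_2 \in L_2} b_{L_2}(x_2) = b(L_1) + b(L_2),
\]
which is the asserted identity.

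There is no serious obstacle here; the whole argument is a short linear-algebra computation once the commuting-ideal structure is invoked. The only points requiring a line of care are (i) justifying that the image of a block-diagonal operator is the internal direct sum of the images of its blocks, so that the ranks genuinely add rather than merely satisfy an inequality, and (ii) observing that maximizing a separable sum over a product set factors into a sum of separate maxima, which is exactly where the independence of the two summands is used.
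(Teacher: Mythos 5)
Your proof is correct and is essentially the intended argument: the paper states this proposition without proof (remarking only that it is easy to prove), and your block-diagonal decomposition of ${\rm ad}_x$, with rank additivity coming from ${\rm im}({\rm ad}_{x_1}|_{L_1}) \cap {\rm im}({\rm ad}_{x_2}|_{L_2}) \subseteq L_1 \cap L_2 = \{0\}$ followed by maximizing the separable sum over the product set, is exactly the standard way to fill it in. Both points you flag as needing care are handled correctly, so there is nothing to add.
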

We also have the following known result.
\begin{prop}\cite{V}\label{upperbound} Let $L$ be a finite dimensional nilpotent Lie algebra. Then ${\rm dim}[L, L] \leq  b(L)(b(L)+1)/2$.
\end {prop}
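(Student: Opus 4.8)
The plan is to realize $[L,L]$ as a sum of the subspaces $[x,L]$, $x\in L$, and to bound how many new dimensions each successive summand can contribute. Set $b=b(L)$. Since every bracket $[x,y]$ lies in $[x,L]$, we have $[L,L]=\sum_{x\in L}[x,L]$, so I can build $[L,L]$ by a greedy flag. First I would pick $x_1$ with $\dim[x_1,L]=b$, which exists by the definition of breadth, and set $W_1=[x_1,L]$, so $\dim W_1=b$. Having chosen $x_1,\dots,x_{k-1}$ with $W_{k-1}=[x_1,L]+\cdots+[x_{k-1},L]$, if $W_{k-1}\neq[L,L]$ I would choose $x_k$ with $[x_k,L]\not\subseteq W_{k-1}$; such an element exists precisely until the flag exhausts $[L,L]$. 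The key quantity is the increment $\dim W_k-\dim W_{k-1}=\dim[x_k,L]-\dim\big([x_k,L]\cap W_{k-1}\big)\le b-\dim\big([x_k,L]\cap W_{k-1}\big)$, using $\dim[x_k,L]\le b$.

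The Lie-theoretic input is that for $i<k$ one has $[x_k,x_i]\in[x_k,L]\cap[x_i,L]\subseteq[x_k,L]\cap W_{k-1}$, so that $[x_k,\langle x_1,\dots,x_{k-1}\rangle]\subseteq[x_k,L]\cap W_{k-1}$. If the elements can be arranged so that these overlap vectors span a space of dimension at least $k-1$, then the increment at the $k$-th stage is at most $b-(k-1)$. Since an increment cannot be negative, this forces the flag to terminate after at most $b$ steps (at stage $b+1$ the bound would read $\le 0$, contradicting genuine progress), so $W_b=[L,L]$. Summing the increments then yields $\dim[L,L]\le b+(b-1)+\cdots+1=b(b+1)/2$, which is exactly the claim. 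If convenient, Proposition~\ref{directsum} first reduces matters to the indecomposable case, and Proposition~\ref{lowerbound} guarantees there is enough room in $L/Z(L)$ to carry out the construction.

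The hard part is establishing the overlap bound $\dim\big([x_k,L]\cap W_{k-1}\big)\ge k-1$, equivalently choosing the $x_i$ so that $[x_k,\langle x_1,\dots,x_{k-1}\rangle]$ is $(k-1)$-dimensional, i.e.\ so that no nonzero combination of $x_1,\dots,x_{k-1}$ centralizes $x_k$. A naive greedy selection need not achieve this, since the newly chosen element may commute with part of the previously chosen span, collapsing some of the commutators $[x_k,x_i]$. This is exactly where I expect nilpotency of $L$ to be indispensable: every $\mathrm{ad}_x$ is then nilpotent and $L$ carries the lower central series $L\supseteq[L,L]\supseteq[L,[L,L]]\supseteq\cdots$, which I would exploit to choose the $x_i$ through successive central quotients so that the required independence of the $[x_k,x_i]$ is forced rather than hoped for. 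Making this selection precise — and ruling out the degenerate directions in which $x_k$ commutes with a nonzero part of $\langle x_1,\dots,x_{k-1}\rangle$ — is the technical heart of the argument, while the dimension count above is routine bookkeeping once the overlaps are under control.
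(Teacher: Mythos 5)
There is a genuine gap, and you name it yourself. The paper, for what it is worth, gives no proof of this proposition at all---it is quoted from Vaughan-Lee \cite{V}---so your argument has to stand on its own, and it does not: everything hinges on the overlap bound $\dim\bigl([x_k,L]\cap W_{k-1}\bigr)\ge k-1$, and you leave it as something you ``expect'' nilpotency and the lower central series to provide. Without that bound the bookkeeping yields nothing: each increment is then only bounded by $b$, the flag can take arbitrarily many steps, and no estimate on $\dim[L,L]$ follows. What you have is a reduction of the proposition to an unproved claim, not a proof.

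Worse, the unproved claim is not a technicality that can be appended; it is essentially the theorem itself. Take $b=2$ and suppose $\dim W_1=2$: at the second step you need every admissible $x_2$ (one with $[x_2,L]\not\subseteq W_1$) to satisfy $\dim\bigl([x_2,L]\cap W_1\bigr)\ge 1$, and granting this plus termination is precisely the assertion $\dim[L,L]\le 3$; any attempt to verify the overlap by counting dimensions inside $[L,L]$ presupposes the bound being proved, so the reasoning is circular exactly where it matters. Nor can the greedy selection be patched locally: the vectors you place in the intersection are the commutators $[x_k,x_i]$, and nothing prevents every admissible $x_k$ from centralizing a nonzero part of ${\rm span}\{x_1,\dots,x_{k-1}\}$, collapsing those commutators. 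Repairing that requires revising the \emph{earlier} choices $x_1,\dots,x_{k-1}$, not choosing $x_k$ cleverly, and invoking ``ad-nilpotency'' or the lower central series gives no concrete mechanism that forces the needed independence. Until that selection argument is actually carried out, the proposal is a plausible strategy sketch, not a proof.
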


Now we have the following characterization of breadth one finite dimensional Lie algebras.
\begin{thm}\label{one-char} 
$b(L) =1$ if and only if ${\rm dim}[L,L] = 1$.
\end{thm}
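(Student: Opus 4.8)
The statement is a biconditional, and I would handle the two implications separately: the implication $\dim[L,L] = 1 \Rightarrow b(L) = 1$ is essentially immediate, while $b(L) = 1 \Rightarrow \dim[L,L] = 1$ is the substantive one. For the easy direction I would note that for every $x \in L$ we have ${\rm im}({\rm ad}_x) = [x,L] \subseteq [L,L]$, so ${\rm rank}({\rm ad}_x) \leq {\rm dim}[L,L] = 1$ for all $x$, whence $b(L) \leq 1$; since $[L,L] \neq 0$ the algebra is non-abelian, so $b(L) \neq 0$ by the remark that $L$ is abelian iff $b(L) = 0$, and therefore $b(L) = 1$.

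For the forward implication the quickest route is simply to invoke Proposition \ref{upperbound}: with $b(L) = 1$ it yields ${\rm dim}[L,L] \leq b(L)(b(L)+1)/2 = 1$, and non-abelianness again forces equality. I would, however, prefer to record a self-contained argument that does not lean on nilpotency. The plan is to argue by contradiction: assume $b(L) = 1$ but ${\rm dim}[L,L] \geq 2$. Since $[L,L] = \sum_{a \in L} {\rm im}({\rm ad}_a)$ is a sum of subspaces each of dimension at most one, the assumption ${\rm dim}[L,L] \geq 2$ means these lines cannot all coincide, so there exist $x, y \in L$ with ${\rm im}({\rm ad}_x) = Fu$ and ${\rm im}({\rm ad}_y) = Fv$ for linearly independent $u, v$. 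Because each of these maps has rank exactly one, I can write $[x,z] = \alpha(z)\,u$ and $[y,z] = \beta(z)\,v$ for nonzero linear functionals $\alpha, \beta$ on $L$.

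The crux — and the step I expect to be the main obstacle — is to manufacture from this data a single element whose ad-map has rank at least two, which will contradict $b(L) = 1$. The key observation is that a vector space is never the union of two proper subspaces, so since $\ker\alpha$ and $\ker\beta$ are proper, I may choose $z \in L$ with $\alpha(z) \neq 0$ and $\beta(z) \neq 0$ simultaneously. For such a $z$ the map ${\rm ad}_z$ sends $x \mapsto -\alpha(z)\,u$ and $y \mapsto -\beta(z)\,v$, two linearly independent vectors, so ${\rm rank}({\rm ad}_z) \geq 2$, the desired contradiction. Hence ${\rm dim}[L,L] \leq 1$, and non-abelianness gives ${\rm dim}[L,L] = 1$.

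Thus the only genuinely delicate point is reducing the global condition ``$[L,L]$ is at least two-dimensional'' to the local statement ``some single element has breadth at least two''; the hyperplane-covering remark is exactly what bridges two independent image lines attached to two different elements and a single element whose image already contains both. If one wishes to remain strictly within the toolkit assembled so far in the paper, Proposition \ref{upperbound} bypasses this entirely, at the cost of restricting to the nilpotent setting that is anyway the focus of the paper.
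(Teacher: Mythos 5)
Your proposal is correct, and the hard direction takes a genuinely different (and in one respect cleaner) route than the paper. The paper also argues by contradiction, but its mechanism is a sum trick: it picks two brackets $[x_1,y_1]=z_1$, $[x_2,y_2]=z_2$ with $z_1,z_2$ independent, asserts that breadth one forces the cross-brackets $[x_1,y_2]$ and $[x_2,y_1]$ to vanish, and then exhibits $x_1+x_2$ as an element of breadth $\geq 2$. Your argument instead fixes two elements whose image lines $Fu$, $Fv$ are distinct, writes the brackets via functionals $\alpha,\beta$, and uses the fact that a vector space is never the union of two proper subspaces to find a single element $z$ with $\alpha(z)\neq 0\neq\beta(z)$; then $z$ itself has breadth $\geq 2$. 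What your version buys: it sidesteps entirely the cross-term issue, which the paper actually glosses over --- breadth one only gives $[x_1,y_2]\in Fz_1$, not $[x_1,y_2]=0$, and making that step rigorous requires an extra observation (a nonzero multiple would already force $b(y_2)\geq 2$); your hyperplane-covering argument needs no such repair and works over any field. Your alternative route via Proposition \ref{upperbound} is fine but, as you correctly flag, it imports nilpotency, whereas the theorem is stated (and proved in the paper) for arbitrary finite-dimensional Lie algebras; so the self-contained argument is the right choice, and it is the one that matches the theorem's actual generality.
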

\begin{proof} Suppose ${\rm dim}[L,L] = 1$. Since $b(L) \leq {\rm dim}[L,L] = 1$ and $L$ is not abelian we have $b(L) = 1$. Conversely, suppose $b(L) =1$ and ${\rm dim}[L,L] \geq 2$. Choose $x_1, y_1, x_2, y_2, z_1, z_2 \in L$ such that $[x_1,y_1] = z_1, [x_2,y_2] =z_2$ and $z_1, z_2$ linearly independent. Since $b(L) = 1$, we have $b(x_1) = 1 = b(x_2)$ which implies that $[x_1, y_2] = 0 = [x_2, y_1]$. Hence $[x_1+x_2, y_1] = z_1, [x_1+x_2, y_2] = z_2$ which implies $b(x_1+x_2) \geq 2$, a contradiction. Since ${\rm dim}[L,L] \geq 1$, it follows that ${\rm dim}[L,L] = 1$.
\end{proof}
Now we give the following classification of finite dimensional nilpotent Lie algebras with breadth one.
\begin{thm}\label{one-class}
Let $L$ be a $n$-dimensional nilpotent Lie algebra with $b(L)=1$. Then $L$ has a basis $\{x_1, y_1, x_2, y_2, \cdots , x_k, y_k, z_1, z_2, \cdots , z_{n-2k}\}$ with $[x_i, y_j] = \delta_{ij} z_1$ and $[z_i, L] = 0$. 
\end{thm}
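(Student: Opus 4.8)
The plan is to reduce the problem to the classification of alternating bilinear forms. First I would invoke Theorem \ref{one-char} to replace the hypothesis $b(L)=1$ by $\dim[L,L]=1$, and fix a nonzero $z_1$ spanning $[L,L]$. The first thing to check is that $z_1$ is central: since $[L,[L,L]]\subseteq[L,L]=Fz_1$, if $[L,z_1]\neq 0$ then necessarily $[L,z_1]=Fz_1$, and the lower central series would satisfy $L^{k}=Fz_1$ for every $k\geq 2$, contradicting nilpotency; hence $z_1\in Z(L)$.

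Next, because every bracket lies in $Fz_1$, I would define a map $\phi\colon L\times L\to F$ by $[x,y]=\phi(x,y)z_1$. This $\phi$ is bilinear and alternating (since $[x,x]=0$), and its radical $\{x\in L:\phi(x,y)=0 \text{ for all } y\}$ is exactly $Z(L)$, because $\phi(x,y)=0$ for all $y$ says precisely that $[x,y]=0$ for all $y$. Consequently $\phi$ descends to a \emph{nondegenerate} alternating bilinear form $\bar\phi$ on the quotient space $L/Z(L)$.

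The key step is then purely linear-algebraic: a nondegenerate alternating form on a finite-dimensional vector space forces that space to have even dimension $2k$ and admits a symplectic (Darboux) basis $\bar x_1,\bar y_1,\dots,\bar x_k,\bar y_k$ with $\bar\phi(\bar x_i,\bar y_j)=\delta_{ij}$ and $\bar\phi(\bar x_i,\bar x_j)=\bar\phi(\bar y_i,\bar y_j)=0$. Lifting these vectors arbitrarily to elements $x_i,y_i\in L$ then gives $[x_i,y_j]=\delta_{ij}z_1$ together with $[x_i,x_j]=[y_i,y_j]=0$. Finally I would extend $z_1$ to a basis $z_1,\dots,z_{n-2k}$ of $Z(L)$; since these lie in the center, $[z_i,L]=0$, and together with the $x_i,y_i$ they form the required basis of $L$, where $n-2k=\dim Z(L)$.

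The routine verifications—bilinearity and alternation of $\phi$, and the fact that the choice of lift is unique only up to the center and therefore does not affect the brackets—are immediate. The only genuine input beyond bookkeeping is the existence of the symplectic basis for $\bar\phi$, and I expect this to be the main point to get right. In particular I would want to confirm that the standard inductive construction of such a basis goes through over the base field $F$; it does for \emph{alternating} forms over any field, which is exactly why it matters that $\phi$ is alternating rather than merely skew-symmetric.
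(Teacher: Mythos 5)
Your proof is correct, and its core is the same as the paper's: the bracket induces an alternating bilinear form with values in the line $[L,L]$, and the structure theorem for alternating forms produces the desired basis. The packaging, however, is genuinely different. The paper fixes a vector-space complement $V$ of $[L,L]$ in $L$ and applies the normal form theorem (cited from Jacobson) to the possibly \emph{degenerate} alternating form on $V$, the radical of that form supplying the extra central vectors $z_2,\dots,z_{n-2k}$; you instead quotient by $Z(L)$, observe that the induced form on $L/Z(L)$ is nondegenerate, take a symplectic basis there, and lift, with the central vectors coming from a basis of $Z(L)$. Two features of your route are worth noting. First, you prove that $z_1$ is central via the lower central series; the paper instead asserts ``since $L$ is nilpotent, $[L,L]=Z(L)$,'' which as an equality is false in general (the Heisenberg algebra plus a one-dimensional abelian summand has $\dim [L,L]=1$ but $\dim Z(L)=2$) --- only the inclusion $[L,L]\subseteq Z(L)$ holds, and that is all either argument actually needs, so your version is the more careful one. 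Second, your remark that the symplectic basis theorem for alternating forms requires no restriction on the characteristic is accurate; the hypothesis ${\rm char}(F)\neq 2$ enters the paper's proof only through its citation, not through a genuine obstruction. The one step you leave implicit --- that $\phi(x,y)$ depends only on the cosets of $x$ and $y$ modulo $Z(L)$, so that arbitrary lifts of the symplectic basis have the stated brackets --- is immediate precisely because $Z(L)$ is the radical of $\phi$, as you note.
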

\begin{proof}
Since $b(L) = 1$, by Theorem \ref{one-char}  ${\rm dim}[L,L] = 1$. Since $L$ is nilpotent $[L, L] = Z(L)$. Let $[L, L] = {\rm span}\{z_1\}$ and $V$ be its complementary subspace in $L$. For $u, v \in V$, we have $[u, v] = \alpha z_1$ for some $\alpha \in F$. We define the bilinear form $\varphi : V \times V \longrightarrow F$ by 
$\varphi (u, v) = \alpha$. This is an alternating form since by definition $\varphi (u, u) = 0$ for all $u \in V$. Since ${\rm char}(F) \neq 2$, by (cf. \cite{J}, Theorem 6.3) there exists a basis $\{x_1, y_1, x_2, y_2, \cdots , x_k, y_k, z_2, \cdots , z_{n-2k}\}$ for $V$ such that the matrix of $\varphi$ with respect to this basis is ${\rm diag}\{S_1,S_2,\ldots,S_k,0,\ldots,0\}$ where $S_i=
\begin{pmatrix}
0&1\\
-1&0
\end{pmatrix}$ 
for $i = 1, 2, \cdots , k$.
Therefore, $L$ has the basis $\{x_1, y_1, x_2, y_2, \cdots , x_k, y_k, z_1, z_2, \cdots , z_{n-2k}\}$ with $[x_i, y_j] = \delta_{ij} z_1$ and $[z_i, L] = 0$ by definition of the bilinear form $\varphi$. 
\end{proof}
\section{Characterizations of breadth two nilpotent Lie algebras}
For the rest of this paper we focus on finite dimensional nilpotent Lie algebras $L$ over $F$ with $b(L) = 2$. Then $\{0\} \subsetneq Z(L) \subsetneq L$. In this section we prove the following Theorem which gives a characterization of such Lie algebras.
\begin{thm}\label{char}
Let $L$ be a finite dimensional nilpotent Lie algebra. Then $b(L) = 2$ if and only if one of the following conditions hold:
\begin{enumerate}
\item ${\rm dim}[L, L] =2$, or
\item ${\rm dim}[L, L] =3$ and ${\rm dim}(L/Z(L)) =3$.
\end{enumerate}
\end{thm}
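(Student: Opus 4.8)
The statement is an ``if and only if,'' and the reverse implications follow quickly from the results already available, so I would dispose of them first. If $\dim[L,L]=2$, then $L$ is nonabelian (so $b(L)\ge 1$) while $b(L)\ne 1$ by Theorem~\ref{one-char} (since $\dim[L,L]\ne 1$); as $b(L)\le\dim[L,L]=2$, we get $b(L)=2$. If instead $\dim[L,L]=3$ and $\dim(L/Z(L))=3$, then Proposition~\ref{lowerbound} gives $3=\dim(L/Z(L))\ge b(L)+1$, so $b(L)\le 2$, and again $b(L)\notin\{0,1\}$ because $\dim[L,L]=3$; hence $b(L)=2$.

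For the forward direction, assume $b(L)=2$. Proposition~\ref{upperbound} gives $\dim[L,L]\le b(L)(b(L)+1)/2=3$, while $\dim[L,L]\notin\{0,1\}$ since $L$ is nonabelian and $b(L)\ne 1$; thus $\dim[L,L]\in\{2,3\}$. The value $2$ is exactly case (1), so I may assume $\dim[L,L]=3$. Here Proposition~\ref{lowerbound} already supplies $\dim(L/Z(L))\ge 3$, and the whole theorem reduces to the single reverse inequality $\dim(L/Z(L))\le 3$. This is where I expect the real work to lie.

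To attack it, write $\bar L=L/Z(L)$, $n=\dim\bar L$, and $W=[L,L]$ with $\dim W=3$. Each $\mathrm{ad}_x$ kills $Z(L)$ and has image in $W$, so the bracket descends to a surjective linear map $\phi\colon\Lambda^2\bar L\to W$ with $\mathrm{im}(\mathrm{ad}_x)=\phi(\bar x\wedge\bar L)$. The hypothesis $b(L)=2$ says precisely that $\dim\phi(\bar x\wedge\bar L)\le 2$ for all $\bar x$, equivalently that the $(n-1)$-dimensional space $\bar x\wedge\bar L$ meets $\ker\phi$ in dimension at least $n-3$. I would assume $n\ge 4$ and try to produce a $\bar x$ of rank $3$, contradicting $b(L)=2$. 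The model case is $n=4$: then $\Lambda^2\bar L$ is $6$-dimensional, $\ker\phi$ is $3$-dimensional, and I need some $\bar x$ with $\ker\phi\cap(\bar x\wedge\bar L)=0$. Since $\bar x\wedge\bar L$ consists of the decomposable $2$-vectors divisible by $\bar x$, it suffices to show the decomposables lying in $\ker\phi$ cannot reach every $\bar x$ through the Grassmann cone. A $3$-plane contained entirely in the cone of decomposables is one of the two rulings of the Pl\"ucker quadric, namely $\bar x_0\wedge\bar L$ for a fixed $\bar x_0$ or $\Lambda^2H$ for a hyperplane $H$; the first would force $\bar x_0$ central in $\bar L$, impossible in $L/Z(L)$, and the second makes $\mathrm{ad}$ of a complementary vector have rank $3$. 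Otherwise the decomposables in $\ker\phi$ form only a one-parameter family, sweeping out a surface in $\bar L$, so some $\bar x$ is missed and has rank $3$. Either alternative yields the contradiction.

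The main obstacle is pushing this past $n=4$. For $n\ge 5$ the kernel $\ker\phi$ is so large that it meets every $\bar x\wedge\bar L$ on dimension grounds alone, so a naive intersection count no longer forces a rank-$3$ witness; I expect to reorganize the argument around the maximal rank attained on the associated net of alternating forms $\{w^{*}\circ\phi\}\subseteq\Lambda^2\bar L^{*}$, ruling out the degenerate low-rank configurations by the same dichotomy (``no element of $\bar L$ is central'' versus ``a rank-$3$ witness appears''), possibly using the nilpotency of $L$ via its lower central series to bound how the brackets stack up. A secondary subtlety is that this geometric language is cleanest over an algebraically closed field, so some care is needed to descend a rank-$3$ witness to an $F$-point: for infinite $F$ this is genericity, while a finite field of characteristic $\ne 2$ would require separate handling.
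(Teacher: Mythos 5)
Your treatment of the reverse implication and the reduction of the forward direction to the single inequality $\dim(L/Z(L))\le 3$ (when $b(L)=2$ and $\dim[L,L]=3$) matches the paper exactly, and your Grassmannian argument for the case $\dim(L/Z(L))=4$ is plausible over an infinite field: the dichotomy for a $3$-plane of decomposables in $\Lambda^2\bar L$ (a ruling $\bar x_0\wedge\bar L$ versus $\Lambda^2 H$) is correct, and each branch does yield a contradiction. But the proof has a genuine gap precisely where you flag it: the case $\dim(L/Z(L))\ge 5$ is never handled. As you observe yourself, for $n\ge 5$ the kernel of $\phi\colon\Lambda^2\bar L\to[L,L]$ meets every $\bar x\wedge\bar L$ on dimension grounds alone, so no rank-$3$ witness can be extracted by intersection counting; and what you offer in its place --- reorganizing around the net of alternating forms $w^*\circ\phi$, ``possibly using the nilpotency of $L$'' --- is a plan, not an argument. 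Since the hard direction of the theorem is exactly the assertion that no nilpotent $L$ of any dimension can have $b(L)=2$, $\dim[L,L]=3$ and $\dim(L/Z(L))\ge 4$, this unhandled range is the entire content of the forward implication, not a residual case.

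The paper closes this gap with machinery your proposal never touches, and it is worth seeing where nilpotency actually enters. The paper fixes a maximal abelian ideal $A$; nilpotency gives $C_L(A)=A$ and $Z(L)\subseteq A$, and the proof splits on $b_A(L)\in\{0,1,2\}$, where $b_A(x)=\mathrm{rank}(\mathrm{ad}_x|_A)$. The case $b_A(L)=0$ forces $A=C_L(A)=L$, a contradiction; the case $b_A(L)=2$ is killed by Proposition~\ref{derived2} (resting on the combinatorial Lemma~\ref{six elements} about the six elements $y,z,y+z,x+y,x+z,x+y+z$, and on Lemma~\ref{dimension}), which forces $\dim[L,L]=2$; and the case $b_A(L)=1$ is handled by Proposition~\ref{centralquotient}, which yields either $\dim(L/Z(L))\le 3$ or a configuration ($\dim[A,L]=1$ and $b(L/[A,L])<2$) that again collapses to $\dim[L,L]\le 2$. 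None of this is visible from the exterior-algebra formulation: the self-centralizing abelian ideal $A$, not the quotient $L/Z(L)$, is what controls the high-dimensional case, and these arguments work uniformly over any field of characteristic $\ne 2$, finite or not, sidestepping the genericity issues your approach would face. To complete your route you would need a substitute for that structure, and you have not supplied one.
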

If $L$ is a finite dimensional nilpotent Lie algebra with ${\rm dim}[L, L] =2$, then $b(L) \neq 0$ since $L$ is not abelian and $b(L) \neq 1$ 
by Theorem \ref{one-char}. Hence $b(L) = 2$ since $b(L) \leq {\rm dim}[L, L]$. Now suppose ${\rm dim}[L, L] =3$ and ${\rm dim}(L/Z(L)) =3$. Then 
$b(L) \leq {\rm dim}[L, L] = 3$ and as before $b(L) \neq 0, 1$. If $b(L) = 3$, then by Proposition \ref{lowerbound} ${\rm dim}(L/Z(L)) \geq 4$ which is a contradiction. Hence $b(L) =2$ proving one direction of Theorem \ref{char}. 
In order to prove the other direction of this theorem we need the following results.

For the rest of this section we assume $L$ is a finite dimensional nilpotent Lie algebra with $b(L) =2$ and $A$ denote a maximal abelian ideal of $L$. Then $Z(L) \subseteq A \subsetneq L$ and $C_L(A) = A$. Let $T_A = \{x \in L \mid b_A(x) =1\}$.

\begin{lemma}\label{six elements}
Let $L$ be a finite dimensional nilpotent Lie algebra with $b(L) =2$, $A$ be a maximal abelian ideal of $L$ and $b_A(L) =2$. Let $x, y, z \in L$ such that $y - z \notin A$, and $b_A(x) = 2$. Then at least one of the elements $ y, z, y+z, x+y, x+z, x+y+z$ is not in $T_A$. 
\end{lemma}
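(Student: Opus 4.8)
The plan is to argue by contradiction: I assume that all six elements $y,z,y+z,x+y,x+z,x+y+z$ lie in $T_A$ and derive a contradiction from $b_A(x)=2$. Since $A$ is an ideal, left multiplication by any element of $L$ preserves $A$, so I work with the endomorphisms $\alpha={\rm ad}_x|_A$, $\beta={\rm ad}_y|_A$ and $\gamma={\rm ad}_z|_A$ of $A$. By hypothesis ${\rm rank}(\alpha)=2$, while membership of an element in $T_A$ says exactly that the corresponding operator has rank $1$; thus I would have
\[
{\rm rank}(\beta)={\rm rank}(\gamma)={\rm rank}(\beta+\gamma)={\rm rank}(\alpha+\beta)={\rm rank}(\alpha+\gamma)={\rm rank}(\alpha+\beta+\gamma)=1 .
\]
The hypothesis $y-z\notin A$ together with $C_L(A)=A$ guarantees $\beta\neq\gamma$, which I keep in reserve to exclude degenerate configurations.

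First I would reduce to a $2\times 2$ problem. Writing the rank-one operators as $\beta=f\otimes a$ and $\gamma=g\otimes b$ (that is, $\beta(w)=f(w)a$, and similarly for $\gamma$), I establish the elementary fact that if a rank-two operator $\alpha$ and a rank-one operator $\sigma$ satisfy ${\rm rank}(\alpha+\sigma)=1$, then ${\rm Im}(\sigma)\subseteq{\rm Im}(\alpha)$ and $\ker\alpha\subseteq\ker\sigma$. Indeed, writing $\alpha=f_1\otimes v_1+f_2\otimes v_2$ with $f_1,f_2$ independent functionals and $v_1,v_2$ a basis of ${\rm Im}(\alpha)$, one checks that if the added vector lay outside ${\rm Im}(\alpha)$, or the added functional were independent of $f_1,f_2$, the rank would be at least $2$. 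Applying this to $\alpha+\beta$ and $\alpha+\gamma$ yields $a,b\in{\rm Im}(\alpha)$ and $\ker\alpha\subseteq\ker f\cap\ker g$. Hence all three of $\alpha,\beta,\gamma$ vanish on $\ker\alpha$ and carry $A$ into the two-dimensional space ${\rm Im}(\alpha)$. Restricting to a complement $U$ of $\ker\alpha$ and choosing bases of $U$ and ${\rm Im}(\alpha)$, I replace $\alpha,\beta,\gamma$ by $2\times2$ matrices $\hat\alpha,\hat\beta,\hat\gamma$ of the same ranks (rank is unchanged because the operators vanish on $\ker\alpha$), with $\hat\alpha$ invertible.

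Then I normalize and compute. Left multiplication by $\hat\alpha^{-1}$ preserves every rank, so with $S=\hat\alpha^{-1}\hat\beta$ and $T=\hat\alpha^{-1}\hat\gamma$ the six conditions become that $S,T,S+T,I+S,I+T,I+S+T$ all have rank $1$. Using the identity $\det(I+M)=1+{\rm tr}(M)+\det(M)$ for $2\times2$ matrices, and that here rank one forces vanishing determinant: membership of $y,z$ gives $\det S=\det T=0$; membership of $x+y,x+z$ then forces ${\rm tr}(S)={\rm tr}(T)=-1$; and membership of $y+z$ gives $\det(S+T)=0$. Consequently
\[
\det(I+S+T)=1+{\rm tr}(S+T)+\det(S+T)=1+(-2)+0=-1\neq 0,
\]
so $I+S+T$, and hence $\alpha+\beta+\gamma$, has rank $2$, contradicting $x+y+z\in T_A$.

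I expect the reduction in the second paragraph to be the main obstacle: the argument only becomes elementary once one knows that $\beta$ and $\gamma$ effectively live on the same two-dimensional source and target as $\alpha$, and this rests on the auxiliary statement about adding a rank-one operator to a rank-two operator. After that, the computation is pure $2\times2$ linear algebra; one sees that $b_A(L)=2$ is used only to keep all ranks at most $2$, while the hypothesis $y-z\notin A$ serves to rule out degenerate cases rather than to drive the final contradiction.
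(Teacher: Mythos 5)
Your proof is correct, and it takes a genuinely different route from the paper's. The paper argues by pigeonhole on the three planes $T_1={\rm span}\{y,z\}$, $T_2={\rm span}\{x+y,x+z\}$, $T_3={\rm span}\{y,x+z\}$: each $[T_j,A]$ has dimension $1$ or $2$, so two of them share a dimension, and in each resulting case a kernel/image comparison (anchored on the claim $b_A(y-z)=1$, which is where $y-z\notin A$ enters) forces ${\rm rank}({\rm ad}_x|_A)\leq 1$, contradicting $b_A(x)=2$. You instead localize all the structure into one rank-perturbation fact --- if ${\rm rank}(\alpha)=2$, ${\rm rank}(\sigma)=1$ and ${\rm rank}(\alpha+\sigma)=1$, then ${\rm Im}(\sigma)\subseteq{\rm Im}(\alpha)$ and $\ker\alpha\subseteq\ker\sigma$ --- which compresses $\alpha,\beta,\gamma$ to $2\times 2$ matrices, and then the identity $\det(I+M)=1+{\rm tr}(M)+\det(M)$ finishes everything: $\det S=\det T=\det(S+T)=0$ and ${\rm tr}(S)={\rm tr}(T)=-1$ give $\det(I+S+T)=-1\neq 0$, so $x+y+z\notin T_A$. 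The perturbation fact is true, and your compressed verification sketch is the right argument (if $a\notin{\rm Im}(\alpha)$, then $(\alpha+\sigma)(\ker f)$ together with a vector outside ${\rm Im}(\alpha)$ already spans two dimensions; if $\ker\alpha\not\subseteq\ker f$, then ${\rm Im}(\alpha+\sigma)$ contains $\alpha(\ker f)={\rm Im}(\alpha)$), though in a final write-up you should spell it out. The comparison is instructive: your contradiction lands on $b_A(x+y+z)=1$ rather than on $b_A(x)=2$, and your argument never actually uses $y-z\notin A$ or $b_A(L)=2$ (despite your claim to "keep them in reserve"), so you in fact prove a slightly stronger statement; the paper's proof, by contrast, genuinely leans on $y-z\notin A$, and its assertion that $b_A(y-z)=1$ follows from $y-z\notin A$ alone is the one step there that deserves more justification, since that hypothesis directly yields only $b_A(y-z)\geq 1$. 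So your route is not only valid but arguably cleaner and more robust; its only cost is importing the rank-perturbation lemma, whereas the paper's case analysis uses nothing beyond linearity of the bracket.
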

\begin{proof} Suppose $ S = \{y, z, y+z, x+y, x+z, x+y+z\} \subset T_A$. Set $T_1 = {\rm span}\{y, z\} = {\rm span}\{y, y-z\}, T_2 = {\rm span}\{x+y, x+z\} = {\rm span}\{y-z, x+z\}$ and $T_3 = {\rm span}\{y, x+z\}$. Note that $b_A(y-z) = 1$ since $y-z \notin A$. Since $b_A(L) = 2$, we have ${\rm dim}[T_j, A] = 1, \, {\rm or} \, 2$ for $1 \leq j \leq 3$. So at least two of these subspaces have dimension $1$ or at least two of these subspaces have dimension $2$.

Suppose ${\rm dim}[T_1, A] = 1$ and ${\rm dim}[T_2, A] = 1$.  Thus $[A, y] = [A, z] = [A, y-z] = [A, x+y] = [A, x+z]$ and each of them is one dimensional. Then $[A, x] = [A, (x+z) + (y-z) - y] \subseteq [A, x+z] + [A, y-z] + [A, -y] \subseteq [A, y]$ contradicting the assumption that $b_A(x) = 2$. Similar augments show that if ${\rm dim}[T_1, A] = 1 = {\rm dim}[T_3, A]$ or ${\rm dim}[T_2, A] = 1 = {\rm dim}[T_3, A]$ we have a contradiction.

Suppose ${\rm dim}[T_1, A] = 2 = {\rm dim}[T_2, A]$. Since $b_A(y) = b_A(z) = 1$ and ${\rm dim}[T_1, A] = 2$, we have $\{y, z\}$ is linearly independent and ${\rm im}({\rm ad}_y|_A) \cap {\rm im}({\rm ad}_z|_A) = \{0\}$. Let $a \in {\rm ker}({\rm ad}_{y-z}|_A)$. Then $[a, y-z] = 0$ which implies $[a, y] = [a, z] = 0$ since ${\rm im}({\rm ad}_y|_A) \cap {\rm im}({\rm ad}_z|_A) = \{0\}$. Therefore, $a \in {\rm ker}({\rm ad}_{y}|_A)$ and 
$a \in {\rm ker}({\rm ad}_{z}|_A)$ Hence ${\rm ker}({\rm ad}_{y-z}|_A) \subseteq {\rm ker}({\rm ad}_{y}|_A)$ and ${\rm ker}({\rm ad}_{y-z}|_A) \subseteq {\rm ker}({\rm ad}_{z}|_A)$. This implies ${\rm ker}({\rm ad}_{y-z}|_A) = {\rm ker}({\rm ad}_{y}|_A) = {\rm ker}({\rm ad}_{z}|_A)$ since each of them have codimension $1$ in $A$. Similarly, since ${\rm dim}[T_2, A] = 2, b_A(x+y) = 1 = b_A(x+z)$ we have $\{x+y, x+z\}$ linearly independent and ${\rm im}({\rm ad}_{x+y}|_A) \cap {\rm im}({\rm ad}_{x+z}|_A) = \{0\}$. Since $y-z = (x+y) - (x+z)$, as before we have
${\rm ker}({\rm ad}_{y-z}|_A) = {\rm ker}({\rm ad}_{x+y}|_A) = {\rm ker}({\rm ad}_{x+z}|_A)$. In particular, ${\rm ker}({\rm ad}_{y}|_A) = {\rm ker}({\rm ad}_{x+y}|_A)$. Since $x = (x+y) - y$ it follows that ${\rm ker}({\rm ad}_{y}|_A) = {\rm ker}({\rm ad}_{x+y}|_A) \subseteq {\rm ker}({\rm ad}_{x}|_A)$ which implies ${\rm rank}({\rm ad}_x|_A) \leq 1$, a contradiction since $b_A(x) = 2$ by assumption. Similar augments show that  ${\rm dim}[T_1, A] = 2 = {\rm dim}[T_3, A]$ or ${\rm dim}[T_2, A] = 2 = {\rm dim}[T_3, A]$ leads to a contradiction. Therefore, $S \not\subseteq T_A$ which proves the result.
\end{proof}

\begin{lemma}\label{dimension}
Let $L$ be a finite dimensional nilpotent Lie algebra with $b(L) =2$ and $A$ be a maximal abelian ideal of $L$. Suppose $b_A(L) = 2$. 
Then ${\rm dim} [A, L] = 2$.
\end{lemma}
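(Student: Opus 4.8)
The plan is to argue by contradiction. Since the hypothesis $b_A(L)=2$ already gives $\dim[A,L]\ge b_A(L)=2$ (using $b_A(L)\le\dim[A,L]$), it suffices to rule out $\dim[A,L]\ge 3$. The guiding idea is that $b(L)=2$ is a much stronger hypothesis than $b_A(L)=2$, and I would exploit the gap between breadth measured on $A$ and breadth measured on all of $L$: an element of breadth $2$ on $A$ has its \emph{entire} image $[L,x]$ trapped in the plane $[A,x]$, so a small perturbation of $x$ inside $A$ can be arranged to force an extra independent bracket to appear, pushing the full breadth above $2$.

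Concretely, I would first choose $x\in L$ with $b_A(x)=2$ and set $P:=[A,x]$, a two-dimensional subspace that lies inside $A$ because $A$ is an ideal. Since $P\subseteq[L,x]$ and $\dim[L,x]=b(x)\le b(L)=2$, the breadth bound forces $[L,x]=P$; in particular $[w,x]\in P$ for every $w\in L$. Next, assuming $\dim[A,L]\ge 3$, the space $[A,L]$ cannot be contained in the plane $P$, so there exist $w\in L$ and $a\in A$ with $[a,w]\notin P$. I then test the element $v:=x+a$. Because $a\in A$ and $A$ is abelian, $[A,v]=[A,x]=P$, so $v$ still has breadth $2$ on $A$; but $[w,v]=[w,x]+[w,a]$, where $[w,x]\in P$ while $[w,a]=-[a,w]\notin P$, so $[w,v]\notin P$. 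Hence $[L,v]\supseteq P+F[w,v]$ is at least three-dimensional, giving $b(v)\ge 3>b(L)$, a contradiction. Therefore $\dim[A,L]\le 2$, and combined with $\dim[A,L]\ge 2$ I conclude $\dim[A,L]=2$.

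The step I expect to carry the whole argument is the choice of the perturbation $v=x+a$: one must see that adding an element of $A$ to $x$ neither changes the action on $A$ (keeping $[A,v]=P$, so $b_A(v)=2$) nor spoils $[w,x]\in P$, while it does inject the component $[w,a]$ lying outside $P$ through the $w$-direction. Everything else is a short computation that uses only that $A$ is an abelian ideal together with the two breadth hypotheses; in particular this route does not require Lemma~\ref{six elements}, although that lemma offers an alternative by instead locating a forbidden six-element configuration in $T_A$ once $\dim[A,L]\ge 3$.
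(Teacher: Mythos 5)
Your proof is correct, and it takes a genuinely more streamlined route than the paper's, though both run on the same engine: since $A$ is abelian, every perturbation $x+a$ with $a \in A$ acts on $A$ exactly as $x$ does, while $b(L)=2$ caps the dimension of its full image. The difference is in how this is exploited. The paper argues directly and through kernels: it fixes $x$ with $b_A(x)=2$, observes that $b_A(c+x)=2=b(L)$ forces ${\rm im}({\rm ad}_{c+x})={\rm im}({\rm ad}_{c+x}|_A)$ for \emph{every} $c\in A$, deduces $L=A+{\rm ker}({\rm ad}_{c+x})$ for all such $c$ (packaged as $L=L_x$ where $L_x=\bigcap_{c\in A}(A+{\rm ker}({\rm ad}_{c+x}))$), and then decomposes an arbitrary $y\in L$ with respect to both ${\rm ker}({\rm ad}_{a+x})$ and ${\rm ker}({\rm ad}_x)$ to compute $[a,y]=[a,y']=[y',x]=[y'-y'',x]\in[A,x]$, giving $[A,L]\subseteq[A,x]$ outright. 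You instead work with images and by contradiction: a single bracket $[a,w]$ escaping the plane $P=[A,x]$ produces the element $v=x+a$ with $[A,v]=P$ and $[w,v]\notin P$, hence $b(v)\ge 3$, which is impossible. Your version buys brevity and transparency --- it avoids the kernel-intersection bookkeeping and the double decomposition --- at no cost in generality; like the paper's argument, it uses only that $A$ is an abelian ideal, not its maximality. One small correction to your closing remark: the paper's own proof of this lemma also does not invoke Lemma~\ref{six elements}; that lemma enters only later, in the proof of Proposition~\ref{derived2}, so it is not the alternative the paper takes here.
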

\begin{proof}
Since $b_A(L) = 2$, there exists $x \in L \setminus A$ such that $b_A(x) =2$. Define $L_x = \cap_{c \in A}(A + {\rm ker}({\rm ad}_{c+x}))$. Clearly 
$A \subseteq L_x \subseteq L$. For any $c \in A$, $b_A(x) = b_A(c+x) = 2 = b(L)$ since $A$ is abelian. Hence ${\rm im}({\rm ad}_{c+x}) = {\rm im}({\rm ad}_{c+x})|_A$ for all $c \in A$. Therefore, for any $y \in L$ there exists $b \in A$ such that $[c+x, y] = [c+x, b]$ which implies $y - b \in {\rm ker}({\rm ad}_{c+x})$ and $y = b + (y - b) \in A+ {\rm ker}({\rm ad}_{c+x})$ for all $c \in A$. Hence $y \in L_x$ and we have $L = L_x$.

Choose any elements $a \in A$ and $y \in L = L_x$. So $y = a'+y' = a''+y''$ where $a', a'' \in A$ and $ y' \in {\rm ker}({\rm ad}_{a+x}), y'' \in {\rm ker}({\rm ad}_{x})$.  Hence $y' - y'' \in A, [a+x, y'] = 0 = [x, y'']$. So we have $[a, y] = [a, a'+y'] = [a, y'] = [y', x] = [y'-y'', x] \in [A, x]$. Therefore, $[A, L] \subseteq [A, x]$ which implies that ${\rm dim}[A, L] \leq b_A(x) = 2$. However, since $b_A(L) = 2$ we have ${\rm dim}[A, L] \geq 2$. Therefore, ${\rm dim}[A, L] = 2$.
\end{proof}
\begin{prop}\label{derived2}
Let $L$ be a finite dimensional nilpotent Lie algebra with $b(L) =2$ and $A$ be a maximal abelian ideal of $L$. Suppose $b_A(L) =2$ and $[x,  L] \subseteq [A, L]$ for all $x \in L$ with $b_A(x) = 2$. Then $[L, L] = [A, L]$ and ${\rm dim}[L, L] = 2$.
\end{prop}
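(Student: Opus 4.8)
The plan is to prove the inclusion $[L,L] \subseteq [A,L]$, since the reverse inclusion $[A,L] \subseteq [L,L]$ is immediate from $A \subseteq L$; combining the two equalities with Lemma \ref{dimension} then gives ${\rm dim}[L,L] = {\rm dim}[A,L] = 2$. Write $M = [A,L]$, which is two-dimensional by Lemma \ref{dimension} and satisfies $M \subseteq A$ because $A$ is an ideal. It suffices to show $[u,v] \in M$ for every pair $u,v \in L$. Note that the available values of $b_A$ are only $0,1,2$, since $b_A(L)=2$.

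First I would dispose of the easy cases. If $b_A(u)=2$ or $b_A(v)=2$, then the hypothesis $[x,L]\subseteq[A,L]$ for $b_A(x)=2$ gives $[u,v] = -[v,u] \in M$ at once. If $u \in A$ or $v \in A$ (equivalently $b_A(u)=0$ or $b_A(v)=0$, using $C_L(A)=A$), then $[u,v] \in [A,L] = M$. This reduces everything to the case $u,v \in T_A$, i.e. $b_A(u)=b_A(v)=1$. Within this case I would further split on whether $u-v \in A$: if $u-v \in A$ then $[u,v] = -[u,u-v] \in [L,A] = M$, so the substantive situation is $u-v \notin A$.

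This is exactly where Lemma \ref{six elements} does the work. Fix any $x$ with $b_A(x)=2$ and apply the lemma with $y=u$, $z=v$: at least one of $u, v, u+v, x+u, x+v, x+u+v$ fails to lie in $T_A$, hence has $b_A$-value $0$ or $2$. The first two are impossible since $b_A(u)=b_A(v)=1$. For each of the remaining four elements $w$ I would argue $[u,v]\in M$ as follows: if $b_A(w)=2$ then $[w,L]\subseteq M$ by hypothesis, while if $b_A(w)=0$ then $w\in A$ so $[w,L]\subseteq [A,L]=M$. In either case one expands the relevant bracket by bilinearity, for instance $[x+u,v]=[x,v]+[u,v]$ or $[x+u+v,u]=[x,u]+[v,u]$, and subtracts the terms $[x,u],[x,v]\in[x,L]\subseteq M$ already known to lie in $M$; what remains is $[u,v]$ (up to sign), which is therefore in $M$. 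The elements $u+v$ and $x+u+v$ are handled by bracketing against $u$, killing the $[u,u]$ term.

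The only point needing care is that $w=x+u$ and $w=x+v$ cannot have $b_A$-value $0$: if $x+u\in A$ then $\mathrm{ad}_x|_A = -\mathrm{ad}_u|_A$ on $A$, forcing $b_A(x)=b_A(u)=1$ and contradicting $b_A(x)=2$, so for these two only the case $b_A(w)=2$ arises. I expect the main obstacle to be bookkeeping rather than any delicate estimate: the task is to organize the four subcases so that in each the correct bilinear expansion of a bracket into a piece of $[x,L]$ plus $[u,v]$ makes the membership $[u,v]\in M$ transparent.
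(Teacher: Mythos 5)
Your proposal is correct and is essentially the paper's own argument in contrapositive form: the paper assumes some $[y,z]\notin[A,L]$, deduces that all six elements $y,z,y+z,x+y,x+z,x+y+z$ lie in $T_A$ (ruling out $b_A$-values $0$ and $2$ exactly as you do), and contradicts Lemma \ref{six elements}, whereas you run the same case analysis directly to extract $[u,v]\in[A,L]$ from whichever element the lemma forces out of $T_A$. The key lemmas (Lemmas \ref{six elements} and \ref{dimension}), the reduction to $b_A(u)=b_A(v)=1$ with $u-v\notin A$, and the bilinear expansions are identical, so there is no substantive difference.
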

\begin{proof} By Lemma \ref{dimension} it suffices to prove that  $[L, L] = [A, L]$. Since $[A, L] \subseteq [L, L]$, we need to show that $[L, L] \subseteq [A, L]$. Suppose $[y, L] \not\subseteq [A, L]$ for some $y \in L \setminus A$. Then there exists $z \in L \setminus A$ such that $[y, z] \notin [A, L]$. If $b_A(y) = 0$, then $y \in C_L(A) =A$ which is a contradiction. If $b_A(y) = 2$, then by assumption $[y, L] \subseteq [A, L]$ which is a contradiction. Hence $b_A(y) =1$. Similarly $b_A(z) \neq 0$ since $z \notin A$. If $b_A(z) = 2$, then by assumption $[z, L] \subseteq [A, L]$. Hence $[y, z] = -[z, y] \in [A, L]$ which is a contradiction. Therefore, $b_A(z) =1$. Also note that $y - z \notin A$ since $y - z \in A$ implies $[y, z] = -[y, y - z] \in [L, A] = [A, L]$ which is a contradiction.

Since $b_A(L) = 2$, there exists $x \in L$ such that $b_A(x) = 2$. Since $A$ is abelian $x \notin A$. By assumption $[x,  L] \subseteq [A, L]$. If $b_A(x+y) = 0$, then $x+y \in C_L(A) = A$ which implies $[y, L] = [-x, L] + [x+y, L] \in [A, L]$, a contradiction. If $b_A(x+y) = 2$, then by assumption $[x+y, L] \subseteq [A, L]$ which implies $[y, L] = [-x, L] + [x+y, L] \in [A, L]$, a contradiction. Therefore, $b_A(x+y) = 1$ since $b_A(x+y) \leq 1$. By similar argument it follows that $b_A(x+z) = 1$. Now suppose $b_A(y+z) = 0$. Then as before $y+z \in C_L(A) = A$ which implies $[y, z] = [y, y+z] \in [L, A] = [A, L]$, a contradiction. If $b_A(y+z) = 2$, then by assumption $[y+z, L] \subseteq [A, L]$ which implyes $[y, z] = [y+z, z] \in [A, L]$, a contradiction. Therefore, $b_A(y+z) = 1$ since $b_A(y+z) \leq 1$. Now suppose $b_A(x+y+z) = 0$. Then as before $x+y+z \in C_L(A) = A$ which implies $[y, z] = [x+y+z, z] - [x, z] \in [A, L]$, a contradiction. If $b_A(x+y+z) = 2$, then by assumption $[x+y+z, L] \in [A, L]$ which implies $[y, z] = [x+y+z, z] - [x, z] \in [A, L]$, a contradiction. Hence $b_A(x+y+z) =1$ since $b_A(x+y+z) \leq 2$. However, by Lemma \ref{six elements} this is a contradicttion. Therefore, $[L, L] \subseteq [A, L]$, which completes the proof.
\end{proof}
\begin{prop}\label{centralquotient}
Let $L$ be a finite dimensional nilpotent Lie algebra with $b(L) =2$ and $A$ be a maximal abelian ideal of $L$. Suppose $b_A(L) = 1$. Then one of the following hold:
\begin{enumerate}
\item  ${\rm dim}[A, L] =1$ and $b(L/[A, L]) < 2$.
\item ${\rm dim}(A/Z(L)) =1$ and  ${\rm dim}(L/Z(L)) \leq 3$.
\end{enumerate}
\end{prop}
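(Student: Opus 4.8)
The plan is to split into cases according to ${\rm dim}[A,L]$. Since $b_A(L)=1$, every ${\rm ad}_x|_A$ has rank at most $1$, so each $[x,A]$ is either $0$ or a line; because $A$ is an ideal we have $[A,L]\subseteq A$ and $[A,L]$ is itself an ideal, and $[A,L]=\sum_{x\in L}[x,A]$ is a sum of such lines. Also ${\rm dim}[A,L]\geq 1$ because $b_A(L)=1>0$. I would show that ${\rm dim}[A,L]=1$ forces alternative (1) and that ${\rm dim}[A,L]\geq 2$ forces alternative (2).

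For the case ${\rm dim}[A,L]=1$, write $[A,L]=Fw$ and pass to $\bar L=L/[A,L]$ with projection $\pi$. First note that $\pi(A)$ is central in $\bar L$, since $[\pi(A),\bar L]=\pi([A,L])=0$. To obtain $b(\bar L)<2$ I would argue by contradiction: if $b(\bar L)=2$ there is $x\in L$ with ${\rm dim}\,\pi([x,L])=2$, and since ${\rm dim}[x,L]\leq b(L)=2$ this forces ${\rm dim}[x,L]=2$ and $[x,L]\cap Fw=0$. But $[x,A]\subseteq[A,L]=Fw$ and $[x,A]\subseteq[x,L]$, so $[x,A]\subseteq[x,L]\cap Fw=0$; hence $x\in C_L(A)=A$, giving $[x,L]\subseteq[A,L]=Fw$ and contradicting ${\rm dim}[x,L]=2$. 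Thus $b(\bar L)\leq 1$, which is alternative (1).

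For the case ${\rm dim}[A,L]\geq 2$ (which forces ${\rm dim}\,A\geq 2$), the key step is the claim that if $x_1,x_2\in L$ have distinct image lines $[x_1,A]\neq[x_2,A]$, then ${\rm ker}({\rm ad}_{x_1}|_A)={\rm ker}({\rm ad}_{x_2}|_A)$. Indeed, if these codimension-one kernels differed, picking $a\in{\rm ker}({\rm ad}_{x_1}|_A)\setminus{\rm ker}({\rm ad}_{x_2}|_A)$ and $a'\in{\rm ker}({\rm ad}_{x_2}|_A)\setminus{\rm ker}({\rm ad}_{x_1}|_A)$ would give $[x_1+x_2,a]=[x_2,a]$ and $[x_1+x_2,a']=[x_1,a']$ lying in the two distinct lines, so ${\rm rank}({\rm ad}_{x_1+x_2}|_A)\geq 2$, contradicting $b_A(L)=1$. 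Since ${\rm dim}[A,L]\geq 2$ guarantees such a pair, a short argument then shows that every $x$ with $b_A(x)=1$ has the same kernel $K$, which has codimension one in $A$. I would then identify $K=Z(L)$: since $K\subseteq{\rm ker}({\rm ad}_x|_A)$ for all $x\in L$ we get $K\subseteq Z(L)$, while $Z(L)\subseteq A$ together with centrality gives $Z(L)\subseteq K$. Hence ${\rm dim}(A/Z(L))=1$.

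To finish alternative (2), fix $a_0\in A\setminus Z(L)$. Writing any $a\in A$ as $ca_0+k$ with $k\in K=Z(L)$ gives $[x,a]=c[x,a_0]$ for every $x\in L$, so $[A,L]=[a_0,L]$ and ${\rm dim}[A,L]=b(a_0)\leq b(L)=2$, forcing ${\rm dim}[A,L]=2$. Moreover $[x,a_0]=0$ implies $[x,A]=0$, so $C_L(a_0)\subseteq C_L(A)=A$; with $A\subseteq C_L(a_0)$ this yields $C_L(a_0)=A$, and rank-nullity for ${\rm ad}_{a_0}$ gives ${\rm dim}(L/A)={\rm dim}[a_0,L]=2$. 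Then ${\rm dim}(L/Z(L))={\rm dim}(L/A)+{\rm dim}(A/Z(L))=3$, which is alternative (2), consistent with the bound ${\rm dim}(L/Z(L))\geq 3$ from Proposition \ref{lowerbound}. I expect the main obstacle to be the common-kernel step and its upgrade to the statement that all $x$ with $b_A(x)=1$ share a single kernel equal to $Z(L)$; once that structural fact is established the dimension count is routine.
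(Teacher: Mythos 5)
Your proof is correct, and it shares its essential ingredients with the paper's argument while organizing them around a different case decomposition. The paper splits on whether two elements of $L\setminus A$ can have distinct kernels on $A$: if so, it uses the same $\mathrm{ad}_{x_1+x_2}|_A$ rank trick as you (but in the contrapositive direction --- distinct kernels force equal image lines) to conclude that all image lines coincide, hence $\mathrm{dim}[A,L]=1$, and then gets $b(L/[A,L])<2$ by noting that for $x\notin A$ the line $[x,A]=[A,L]$ is exactly the dimension lost in the quotient; if not (all kernels coincide), it identifies the common kernel with $Z(L)$ precisely as you do, and bounds $\mathrm{dim}(L/A)\le 2$ by showing that three elements independent modulo $A$ would yield $b(u)\ge 3$ for $u\in A\setminus Z(L)$ --- the same fact you package as $C_L(a_0)=A$ plus rank--nullity. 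Your split on $\mathrm{dim}[A,L]$ instead of on kernel coincidence is arguably cleaner: it matches the dichotomy in the conclusion, your case~(1) is a self-contained contradiction argument that needs no kernel analysis at all (the paper must first prove that all image lines coincide before it even knows $\mathrm{dim}[A,L]=1$), and your case~(2) delivers the sharper exact equalities $\mathrm{dim}[A,L]=2$ and $\mathrm{dim}(L/Z(L))=3$ rather than inequalities. The only step you leave compressed --- that once two elements with distinct image lines have a common kernel $K$, every $x$ with $b_A(x)=1$ has kernel $K$ --- is indeed short (any such $x$ has image line differing from at least one of the two), so nothing is missing.
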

\begin{proof} 
Since $b_A(L) = 1$ and $A = C_L(A)$, for all $x \in L \setminus A$ we have $b_A(x) = 1$ and ${\rm nullity}({\rm ad}_x|_A) = {\rm dim}(A) - 1$.
Suppose that there exists elements $x, y \in L\setminus A$ such that ${\rm ker}({\rm ad}_x|_A) \neq {\rm ker}({\rm ad}_y|_A)$. Then $A = {\rm ker}({\rm ad}_x|_A) + {\rm ker}({\rm ad}_y|_A)$ and there exists elements $a, b \in A$ such that $[x, a] = 0 = [y, b]$ but $[x, b] \neq 0, [y, a] \neq 0$. Since $b_A(L) = 1$, it follows that $[x, b]$ and $[y, a]$ are linearly dependent. Hence $[x, A] = [y, A]$ since $b_A(x) = 1 = b_A(y)$. Let $z \in L \setminus A$ and $[z, A] \neq [x, A] = [y, A]$. Then by the above argument ${\rm ker}({\rm ad}_z|_A) = {\rm ker}({\rm ad}_x|_A)$ and ${\rm ker}({\rm ad}_z|_A) = {\rm ker}({\rm ad}_y|_A)$. This is a contradiction since by assumption ${\rm ker}({\rm ad}_x|_A) \neq {\rm ker}({\rm ad}_y|_A)$. Therefore, $[z, A] = [x, A]$ for all $z \in L \setminus A$. Hence ${\rm dim}[L, A] = {\rm dim}[A, L] = 1$ since $b_A(L) = 1$. Now let $ x + [A, L] \in L/ [A, L]$. Then $[x, A] \subseteq [x, L]$ and ${\rm dim}[x, A] = 1$ since $b_A(x) =1$.  Hence ${\rm dim}[x, L]/[x, A] \leq 1$ since $b(L) = 2$. This implies $b(L/[A, L]) <2$.

Now suppose that ${\rm ker}({\rm ad}_x|_A) = {\rm ker}({\rm ad}_y|_A)$ for all $x, y \in L\setminus A$. Since $Z(L) \subseteq A$, we have $Z(L) \subset {\rm ker}({\rm ad}_x|_A)$ for all $x \in L \setminus A$. Let $z \in {\rm ker}({\rm ad}_x|_A)$. Then $[z, y] =0$ for all $y \in L$ since $A$ is abelian and ${\rm ker}({\rm ad}_x|_A) = {\rm ker}({\rm ad}_y|_A)$ for all $y \in L\setminus A$. Therefore, ${\rm ker}({\rm ad}_x|_A) = Z(L)$  and 
${\rm dim}(Z(L)) = {\rm nullity}({\rm ad}_x|_A) = {\rm dim}(A) - 1$ for all $x \in L \setminus A$. Hence ${\rm dim}(A/Z(L)) = 1$. Choose $u \in A \setminus Z(L)$. Then $A = {\rm span}\{u, Z(L)\}$. Suppose ${\rm dim}(L/A) \geq 3$. Then there exist $x, y, z \in L \setminus A$ such that $\{x+A, 
y+A, z+A\}$ is linearly independent. Suppose $k_1[x, u] + k_2[y, u] + k_3[z, u] = 0$ for some $k_1, k_2, k_3 \in F$. Then $[k_1x +k_2y +k_3z, u] = 0$ which implies $[k_1x +k_2y +k_3z, A] = 0$. Hence $k_1x +k_2y +k_3z \in C_L(A) = A$ which is a contradiction. Therefore, $\{[x,u], [y, u], [z, u]\}$ is linearly independent and $b(u) \geq 3$ which is a contradiction since $b(L) = 2$. Hence   ${\rm dim}(L/A) \leq 2$ which implies  ${\rm dim}(L/Z(L)) \leq 3$.
\end{proof}

{\bf Proof of Theorem \ref{char}:} We assume that $b(L) =2$. Then by Proposition \ref{upperbound} we have ${\rm dim}[L,L] \leq 3$. Since $L$ is not abelian ${\rm dim}[L,L] \geq1$ and by Theorem \ref{one-char} ${\rm dim}[L,L] \neq 1$. Hence ${\rm dim}[L,L] = 2 \, {\rm or} \, 3$. Suppose that ${\rm dim}[L,L] = 3$. Since $b_A(L) \leq b(L) =2$, we have $b_A(L) = 0, 1 \, {\rm or} \, 2$. If $b_A(L) = 0$, then $A = Z(L)$. Since $A = C_L(A)$, it follows that $A = L$. This is a contradiction since $b(L) = 2$. If $b_A(L) =2 = b(L)$, then for all $x \in L$ with $b_A(x) =2$ we have $[L, x] = [A, x] \subseteq [A, L]$. Hence by Proposition \ref{derived2}, we have ${\rm dim}[L, L] = 2$ which is a contradiction. Finally suppose $b_A(L) = 1$. Then by Proposition \ref{centralquotient}, either ${\rm dim}(L/Z(L)) \leq 3$ or ${\rm dim}[A, L] =1$ and $b(L/[A, L]) < 2$. In the later case $b(L/[A, L]) = 0 \, {\rm or} \, 1$. If $b(L/[A, L]) = 0$, then $L/[A, L]$ is abelian which implies that for all $x \in L$ we have ${\rm Im}({\rm ad}_x) \subseteq [A, L]$. Since in this case we also have ${\rm dim}[A, L] = 1$, it follows that $b(L) = 1$ which is a contradiction. If $b(L/[A, L]) = 1$, then by 
Theorem \ref{one-char} ${\rm dim}([L, L]/[A, L]) = 1$ which implies that ${\rm dim}[L, L] = 2$ since ${\rm dim}[A, L] = 1$. This is a contradiction. Therefore,  ${\rm dim}(L/Z(L)) \leq 3$. Hence by Proposition \ref{lowerbound} we have ${\rm dim}(L/Z(L)) = 3$, proving the theorem.

\begin{cor}\label{derived3} If ${\rm dim}[L, L] = 3$, the ${\rm dim}(A/Z(L)) = 1$.
\end{cor}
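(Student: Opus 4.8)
The plan is to extract Corollary~\ref{derived3} directly from the machinery already assembled in the proof of Theorem~\ref{char}. The hypothesis is that $\dim[L,L]=3$, and I want to conclude $\dim(A/Z(L))=1$. The natural first step is to retrace which of the three cases $b_A(L)\in\{0,1,2\}$ can actually occur under this hypothesis. As established in the proof of the theorem, $b_A(L)=0$ forces $A=Z(L)=C_L(A)=L$, contradicting $b(L)=2$, and $b_A(L)=2$ forces $\dim[L,L]=2$ via Proposition~\ref{derived2}, contradicting $\dim[L,L]=3$. Hence the only surviving possibility is $b_A(L)=1$, so I would open the argument by citing this reduction.

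With $b_A(L)=1$ in hand, I would invoke Proposition~\ref{centralquotient}, which gives two alternatives: either (i) $\dim[A,L]=1$ and $b(L/[A,L])<2$, or (ii) $\dim(A/Z(L))=1$ and $\dim(L/Z(L))\le 3$. The goal is exactly the first assertion of alternative (ii), so the task reduces to ruling out alternative (i). This is precisely the step carried out inside the theorem's proof: if $\dim[A,L]=1$ and $b(L/[A,L])=0$ then every $\operatorname{ad}_x$ has image inside the one-dimensional $[A,L]$, forcing $b(L)=1$, a contradiction; and if $b(L/[A,L])=1$ then by Theorem~\ref{one-char} $\dim([L,L]/[A,L])=1$, so $\dim[L,L]=2$, again contradicting $\dim[L,L]=3$. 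Either way alternative (i) collapses, leaving alternative (ii), and in particular $\dim(A/Z(L))=1$.

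I expect the proof to be short, since essentially all the work has been done: the corollary is really a bookkeeping consequence of the case analysis in the theorem. The one point requiring mild care is confirming that the contradiction in alternative (i) genuinely uses the hypothesis $\dim[L,L]=3$ rather than some weaker feature, so that the two sub-cases $b(L/[A,L])=0$ and $b(L/[A,L])=1$ are both eliminated; this is where $\dim[L,L]=3$ does the decisive work, since the derived algebra having dimension $3$ is incompatible with both $b(L)=1$ and $\dim[L,L]=2$. The main (minor) obstacle is therefore simply to present the dichotomy cleanly and note that only the second branch of Proposition~\ref{centralquotient} survives, yielding $\dim(A/Z(L))=1$.
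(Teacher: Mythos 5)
Your proposal is correct and follows essentially the same route as the paper: reduce to $b_A(L)=1$ via the case analysis in the proof of Theorem \ref{char}, then apply Proposition \ref{centralquotient} and rule out its first alternative (using the hypothesis ${\rm dim}[L,L]=3$) so that the second alternative, and hence ${\rm dim}(A/Z(L))=1$, must hold. The paper's own proof is just a terser citation of these same steps.
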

\begin{proof}
As shown in the proof of Theorem \ref{char} above, if ${\rm dim}[L, L] = 3$, then $b_A(L) =1$ and ${\rm dim}(L/Z(L)) \leq 3$. Hence by Proposition \ref{centralquotient} we have ${\rm dim}(A/Z(L)) = 1$.
\end{proof}
\section{Classification of breadth two nilpotent Lie algebras}
In this section we use the characterization given in Theorem \ref{char} to determine the isomorphism classes of finite dimensional nilpotent Lie algebras $L$ with $b(L) = 2$. Note that in this case by Proposition \ref{lowerbound} we have ${\rm dim}(L/Z(L)) \geq 3$. Hence ${\rm dim}(L) \geq 4$ since $Z(L) \neq \{0\}$. If ${\rm dim}(L) = 4$, then ${\rm dim}Z(L) = 1$ and by Theorem \ref{char} we have ${\rm dim}[L, L] =2$. In the following we only state the nonzero brackets for the basis vectors of a Lie algebra.
\begin{thm}\label{dim4}
Let $L$ be a $4$-dimensional nilpotent Lie algebra with $b(L) =2$. Then $L = {\rm span}\{x_1, x_2, x_3, z\}$ with 
$[x_1, x_2] = x_3, [x_1, x_3] = z$.
\end{thm}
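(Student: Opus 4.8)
The plan is to pin down the lower central series of $L$ completely, read off an adapted basis, and then normalize the two remaining structure constants by a single change of basis. The discussion preceding the statement already gives ${\rm dim}\,Z(L)=1$ and ${\rm dim}[L,L]=2$, so writing $L^2=[L,L]$ and $L^3=[L,L^2]$ I first determine these spaces exactly. By nilpotency $L^3\subsetneq L^2$, hence ${\rm dim}\,L^3\le 1$; and $L^3\ne 0$, since $L^3=0$ would force $[L,L]\subseteq Z(L)$, contradicting $2={\rm dim}[L,L]>{\rm dim}\,Z(L)=1$. Therefore ${\rm dim}\,L^3=1$, which forces $L^4=0$ and so $L^3\subseteq Z(L)$; comparing dimensions gives $L^3=Z(L)$. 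This yields the flag $L\supset L^2\supset L^3=Z(L)\supset 0$ with dimensions $4,2,1,0$.

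Next I would build a basis adapted to this flag. Choose $z$ spanning $L^3=Z(L)$, lift a basis $x_1,x_2$ of $L/L^2$, and set $x_3=[x_1,x_2]$. Because $[L^2,L]\subseteq L^3$ and $[L^2,L^2]\subseteq L^4=0$, the bracket induces a well-defined alternating map $L/L^2\times L/L^2\to L^2/L^3$ whose image is all of $L^2/L^3$; as the target is one dimensional it is spanned by $[x_1,x_2]+L^3$, so $x_3\in L^2\setminus L^3$ and $\{x_1,x_2,x_3,z\}$ is a basis of $L$. The only brackets left to name are $[x_1,x_3]$ and $[x_2,x_3]$, which lie in $[L,L^2]=L^3={\rm span}\{z\}$, say $[x_1,x_3]=\alpha z$ and $[x_2,x_3]=\beta z$, while every bracket with $z$ vanishes. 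Here $(\alpha,\beta)\ne(0,0)$, for otherwise $L^3=[L,L^2]=0$, contradicting ${\rm dim}\,L^3=1$.

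The heart of the argument is the normalization of $(\alpha,\beta)$ to $(1,0)$. I would replace $x_1,x_2$ by $y_1=ax_1+bx_2$ and $y_2=\beta x_1-\alpha x_2$, where $a,b$ are chosen so that $s:=a\alpha+b\beta\ne 0$ (possible since $(\alpha,\beta)\ne(0,0)$); the determinant of this substitution is $-s\ne 0$, so $\{y_1,y_2\}$ is again a basis of $L/L^2$. A direct computation gives $[y_1,y_2]=-s\,x_3$, while $[y_2,-s\,x_3]=0$ and $[y_1,-s\,x_3]=-s^2 z$. Setting $x_3'=-s\,x_3$ and $z'=-s^2 z$ (both nonzero rescalings, hence still spanning $L^2/L^3$ and $L^3$) produces $[y_1,y_2]=x_3'$, $[y_1,x_3']=z'$, and $[y_2,x_3']=0$, which after renaming is exactly the asserted presentation; since $L$ is an honest Lie algebra the Jacobi identity needs no separate verification, so this single isomorphism type occurs. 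I expect the only delicate point to be the bookkeeping in this last step: one must alter $x_1,x_2$ and rescale the central vector simultaneously while keeping the relation $[x_1,x_2]=x_3$ intact, since $x_3$ is defined as $[x_1,x_2]$ and therefore moves when $x_1,x_2$ are changed.
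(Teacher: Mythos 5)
Your proof is correct --- I checked each step, including the final change of basis ($[y_1,y_2]=-s\,x_3$, $[y_1,x_3]=s\,z$, $[y_2,x_3]=0$ with $s=a\alpha+b\beta\neq 0$), and it closes up without gaps --- but it takes a genuinely different route from the paper's. The paper first proves ${\rm dim}\,Z(L)=1$ by a purity argument: if $Z(L)\not\subseteq[L,L]$ then $L$ splits off an abelian ideal and Proposition \ref{directsum} forces $b(L)\leq 1$, and if $Z(L)=[L,L]$ then $[L,L]$ would be one dimensional; it then identifies the central quotient $L/Z(L)$ with the three dimensional Heisenberg algebra and lifts a Heisenberg basis, obtaining structure constants $b,c$ analogous to your $\alpha,\beta$. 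You instead pin down the lower central series $L\supset L^2\supset L^3=Z(L)\supset 0$ and read an adapted basis off the induced alternating map on $L/L^2$, which replaces the appeal to the classification of three dimensional nilpotent algebras by elementary filtration facts ($[L^i,L^j]\subseteq L^{i+j}$ and strict descent of the series); your starting data ${\rm dim}\,Z(L)=1$, ${\rm dim}[L,L]=2$ is legitimately quoted from the discussion preceding the theorem, so nothing is circular. The normalization also differs: the paper splits into cases ($b\neq 0,c=0$; $b=0,c\neq 0$; $b\neq 0, c\neq 0$) with a separate substitution in each, whereas your single substitution $y_1=ax_1+bx_2$, $y_2=\beta x_1-\alpha x_2$ handles all cases uniformly, at the cost of the simultaneous rescaling of $x_3$ and $z$ that you correctly flag as the delicate bookkeeping point. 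On balance, your argument is more self-contained and avoids case analysis, while the paper's makes the structure of the central quotient explicit, which is the pattern it reuses in its subsequent classification theorems for higher dimensions.
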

\begin{proof}
Since $L$ is a nilpotent Lie algebra of dimension $4$, by Theorem \ref{char} we have ${\rm dim}[L, L] = 2$. Suppose $Z(L) \not\subseteq [L, L]$. Then $L$ has an abelian ideal $I$ as a direct summand. This implies $L = L' \oplus I$ where $L'$ is nilpotent of dimension at most $3$. Hence $b(L') \leq 1$ and $b(I) = 0$. This means by Proposition \ref{directsum} that $b(L) \leq 1$ which is a contradiction. Hence $Z(L) \subseteq  [L, L]$. Suppose $Z(L) = [L, L]$. Choose a basis $\{u, v\}$ for $Z(L)$ and extend it to a basis $\{x, y, u, v\}$ of $L$. Then $[L, L] = {\rm span}\{[x, y]\}$ contradicting the fact that ${\rm dim}[L, L] = 2$. Therefore ${\rm dim}Z(L) = 1$. Hence $L/Z(L)$ is a three dimensional nilpotent Lie algebra which is not abelian since $Z(L) \subsetneq [L, L]$. Thus $L/Z(L)$ is a three dimensional Heisenberg Lie algebra. Choose a basis $\{x+Z(L), y+Z(L), v+Z(L)\}$ with $\{[x, y] - v, [x, v], [y, v]\} \subset Z(L)$. Suppose $Z(L) = {\rm span}\{u\}$. Then $[x, y] = v + au, [x, v] = bu, [y, v] = cu$ for some $a, b, c \in F$. Setting $w = v+au$, we have $[x, y] = w, [x, w] =bu, [y, w] = cu$. Note that since ${\rm dim}[L, L] = 2$, both $b$ and $c$ can not be zero. If $b = 0$ and $c = 0$, Then $[L, L] = {\rm span}\{w\}$ which is a contradiction. If $b \neq 0, c = 0$, then we set $x_1 = x, x_2 = y, x_3 = w, z = bu$. The case $b = 0, c \neq 0$ is similar. If $b \neq 0 $ and $c \neq 0$, then set $x_1 = x, x_2 = by - cx, x_3 = bw, z = b^2u$ to complete the proof.
\end{proof}
\begin{defn} A Lie algebra is said to be ${\it pure}$ if it does not have an abelian ideal as a direct summand.
\end{defn}
It follows immediately from the definition that $L$ is pure if and only if $Z(L) \subseteq [L, L]$.
Since we can harmlessly add abelian ideals as direct summands, it is sufficient to classify the finite dimensional pure nilpotent Lie algebras. In particular, it follows from Theorem \ref{one-class} that any finite dimensional pure nilpotent Lie algebra $L$ with $b(L) =1$ is isomorphic to a Heisenberg Lie algebra. 
\begin{prop}\label{reducible} Let $L$ be a finite dimensional pure nilpotent Lie algebra with $b(L) = 2$ and suppose $L = L_1 \oplus L_2 \oplus \cdots \oplus L_n$ is a direct sum of ideals. Then $n = 2$ and each $L_i$ is isomorphic to a Heisenberg Lie algebra. (Hence ${\rm dim}(L)$ is even.)
\end{prop}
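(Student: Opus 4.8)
The plan is to reduce everything to the additivity of breadth under direct sums (Proposition \ref{directsum}) together with the breadth-one classification (Theorem \ref{one-class}), reading the hypothesis as a nontrivial decomposition (each $L_i \neq 0$). I would first establish $n=2$ from a counting argument on breadths, then upgrade each factor to a Heisenberg algebra using the purity criterion, and finally read off the parity of ${\rm dim}(L)$.

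First I would observe that no summand $L_i$ can be abelian. Indeed, since the $L_j$ are ideals with $[L_i, L_j] = 0$ for $j \neq i$, any abelian $L_i$ would be an abelian ideal of $L$ occurring as a direct summand, contradicting the purity of $L$. Hence each $L_i$ is non-abelian, so $b(L_i) \geq 1$. By Proposition \ref{directsum}, $2 = b(L) = \sum_{i=1}^{n} b(L_i) \geq n$, which forces $n \leq 2$; as the decomposition is nontrivial, $n = 2$ and $b(L_1) = b(L_2) = 1$.

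Next I would show each $L_i$ is itself pure and therefore Heisenberg. For a direct sum of ideals one has $Z(L) = Z(L_1) \oplus Z(L_2)$ and $[L,L] = [L_1,L_1] \oplus [L_2,L_2]$, so the criterion that $L$ is pure if and only if $Z(L) \subseteq [L,L]$ (noted after the definition of pure) transfers factorwise: $Z(L_i) \subseteq [L_i,L_i]$ for $i=1,2$. Thus each $L_i$ is a pure nilpotent Lie algebra with $b(L_i)=1$, and by the consequence of Theorem \ref{one-class} recorded above, each $L_i$ is isomorphic to a Heisenberg Lie algebra. Since every Heisenberg Lie algebra has odd dimension, writing ${\rm dim}(L_i) = 2k_i+1$ gives ${\rm dim}(L) = (2k_1+1)+(2k_2+1)$, which is even.

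The main point requiring care is the bookkeeping of which ideals are direct summands of $L$ as opposed to merely of a factor $L_i$; I would handle this through the clean $Z(L) \subseteq [L,L]$ criterion rather than by chasing explicit complements, since that criterion behaves transparently under direct sums of ideals. I would also flag at the outset that ``direct sum of ideals'' must be understood nontrivially: a single Heisenberg factor ($n=1$) is indecomposable, so without that convention it would be a vacuous exception to the claim $n=2$.
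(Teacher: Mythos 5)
Your proof is correct and follows essentially the same route as the paper: additivity of breadth (Proposition \ref{directsum}) plus purity to force $n=2$ with $b(L_1)=b(L_2)=1$, then Theorem \ref{one-class} to identify each factor as Heisenberg. Your extra steps — verifying factorwise purity via $Z(L_i)\subseteq[L_i,L_i]$ and flagging that the decomposition must be nontrivial — are details the paper's proof leaves implicit, and they strengthen rather than alter the argument.
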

\begin{proof}
By Proposition \ref{directsum} it follows that $\sum_{i=1}^nb(L_i) = b(L) = 2$. Since $L$ is pure $b(L_i) \neq 0, 1\leq i \leq n$ as $L_i$ is not an abelian ideal. Hence $n=2$ and $b(L_i) = 1, i = 1, 2$. So by Theorem \ref{one-class} , each $L_i, i = 1, 2$ is isomorphic to a Heisenberg Lie algebra. 
\end{proof}
Now using Theorem \ref{char} we will classify finite dimensional pure nilpotent Lie algebras $L$ with $b(L) = 2$. By Theorem \ref{dim4}  and the previous observation we can assume that ${\rm dim}(L) \geq 5$. By Proposition \ref{reducible} we also assume that $L$ is irreducible (i.e. not a direct summand of Lie algebras of smaller dimensions).

\begin{thm}
Let $L$ be a finite dimensional pure nilpotent Lie algebra with $b(L) =2$ and ${\rm dim}[L, L] =3 = {\rm dim}L/Z(L)$. Then
\begin{enumerate}
\item $L={\rm span}\{x_1,x_2,x_3,z_1,z_2\}$ with $[x_1,x_2]=x_3, [x_1,x_3]=z_1, [x_2,x_3]=z_2$,or
\item $L={\rm span}\{x_1,x_2,x_3,z_1,z_2,z_3\}$ with $[x_1,x_2]=z_1, [x_1,x_3]=z_2,[x_2,x_3]=z_3$.
\end{enumerate}
\end{thm}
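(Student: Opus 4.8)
The plan is to analyze the quotient $\bar{L} = L/Z(L)$. By hypothesis ${\rm dim}(L/Z(L)) = 3$, so $\bar L$ is a three-dimensional nilpotent Lie algebra and is therefore either abelian or isomorphic to the three-dimensional Heisenberg algebra. Since $L$ is pure, $Z(L) \subseteq [L,L]$, whence ${\rm dim}\,Z(L) \leq {\rm dim}[L,L] = 3$; moreover $[\bar L,\bar L] = ([L,L]+Z(L))/Z(L) = [L,L]/Z(L)$ has dimension $3 - {\rm dim}\,Z(L)$. Recalling that a three-dimensional nilpotent Lie algebra has derived algebra of dimension at most $1$, this forces ${\rm dim}\,Z(L) \in \{2,3\}$, the values $3$ and $2$ yielding cases (2) and (1) respectively. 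In each case the strategy is the same: lift a basis of $\bar L$ to elements $x_1,x_2,x_3 \in L$, record the brackets $[x_i,x_j]$, and use the equality ${\rm dim}[L,L] = 3$ to force the relevant bracket images to be linearly independent and hence to serve as a basis of $Z(L)$ or of $[L,L]$.

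First I would treat ${\rm dim}\,Z(L) = 3$. Here $[\bar L,\bar L] = 0$, so $\bar L$ is abelian and $[L,L] \subseteq Z(L)$; combined with $Z(L) \subseteq [L,L]$ this gives $Z(L) = [L,L]$, and ${\rm dim}\,L = 6$. Choosing $x_1,x_2,x_3$ lifting a basis of $\bar L$ and noting that every bracket involving an element of $Z(L)$ vanishes, we see that $[L,L]$ is spanned by the three elements $[x_1,x_2]$, $[x_1,x_3]$, $[x_2,x_3]$. Since ${\rm dim}[L,L] = 3$, these three are linearly independent, and relabelling them $z_1,z_2,z_3$ yields exactly presentation (2).

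Next I would treat ${\rm dim}\,Z(L) = 2$, so $\bar L$ is Heisenberg and ${\rm dim}\,L = 5$. Pick $x_1,x_2,x_3$ lifting a Heisenberg basis of $\bar L$, with $[\bar x_1,\bar x_2] = \bar x_3$ and $\bar x_3$ central in $\bar L$. Replacing $x_3$ by $[x_1,x_2]$, which still projects to $\bar x_3$, arranges $[x_1,x_2] = x_3$ exactly. Centrality of $\bar x_3$ in $\bar L$ gives $[x_1,x_3], [x_2,x_3] \in Z(L)$, so $[L,L] = {\rm span}\{x_3, [x_1,x_3], [x_2,x_3]\}$. Since $x_3 \notin Z(L)$ while $[x_1,x_3], [x_2,x_3] \in Z(L)$, the hypothesis ${\rm dim}[L,L] = 3$ forces $[x_1,x_3]$ and $[x_2,x_3]$ to be linearly independent, hence a basis $z_1,z_2$ of the two-dimensional space $Z(L)$. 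This is precisely presentation (1).

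The computations are routine once this framework is in place; the one point requiring care is the Heisenberg case, where one must first make the lift adjustment $[x_1,x_2] = x_3$ and then extract the linear independence of $[x_1,x_3]$ and $[x_2,x_3]$ purely from the dimension count, rather than assuming it outright. I would also remark that the Jacobi identity need not be re-verified for the resulting presentations, since they are read off from an algebra already known to be a Lie algebra; this simultaneously shows that each of the two cases determines a single isomorphism class.
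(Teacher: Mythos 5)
Your proof is correct and follows essentially the same route as the paper: since $L$ is pure, $Z(L)\subseteq[L,L]$, one rules out ${\rm dim}\,Z(L)=1$ by nilpotency of the $3$-dimensional quotient, and then splits into ${\rm dim}\,Z(L)=2$ (giving the $5$-dimensional algebra (1)) and ${\rm dim}\,Z(L)=3$ (giving the $6$-dimensional algebra (2)), lifting a basis of $L/Z(L)$ and using ${\rm dim}[L,L]=3$ to force linear independence of the brackets. Your normalization $x_3:=[x_1,x_2]$ in the Heisenberg case is a slightly cleaner way to absorb the structure constants $a_0,a_1,a_2,b_i,c_i$ that the paper manipulates explicitly, but the argument is the same in substance.
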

\begin{proof}
Since $L$ is pure we have $Z(L) \subseteq [L, L]$, so ${\rm dim}Z(L) = 1, 2$ or $3$. If ${\rm dim}Z(L) = 1$, then ${\rm dim}(L) = 4$ and $[L, L]$ has codimension $1$ in $L$ which contradicts nilpotency of $L$. Suppose ${\rm dim}Z(L) = 2$. Then ${\rm dim}(L) = 5$. Choose a basis $\{u,v\}$ for $Z(L)$ and extend it to a basis $\{x, y, z, u, v\}$ of $L$ where $\{z, u, v\}$ is a basis for $[L, L]$. Since $L/Z(L)$ is a $3$-dimensional nilpotent Lie algebra we have $[L, [L, L]] \subseteq Z(L)$, hence $[x, z] =b_1u+b_2v$ and $[y, z] =c_1u+c_2v$ for some $b_1, b_2, c_1, c_2 \in F$. Also $[x, y] =a_0z+ a_1u+a_2v$ for some $a_0, a_1, a_2 \in F$ where $a_0 \neq 0$ since $[L, L] \supsetneq Z(L)$. Now setting $x_1 =x, x_2 = y, x_3 = a_0z+ a_1u+a_2v, z_1 = a_0(b_1u+b_2v), z_2 = a_0(c_1u+c_2v)$ we obtain the algebra $L$ in $(1)$. 

Now we assume ${\rm dim}Z(L) = 3$. Then ${\rm dim}(L) = 6$ and $Z(L) = [L, L]$. Choose a basis $\{u,v, w\}$ for $Z(L)$ and extend it to a basis $\{x, y, z, u, v, w\}$ of $L$. Since ${\rm dim}[L, L] = 3$, and $Z(L) = [L, L]$, we observe that $\{z_1= [x, y], z_2 = [y, z], z_3 = [z, x]\}$ is a basis for 
$Z(L)$. Now setting $x_1 = x, x_2 = y, x_3 = z$ we easily see that $L = {\rm span}\{x_1, x_2, x_3, z_1, z_2, z_3\}$ with the bracket structure given in $(2)$.
\end{proof}
\begin{thm}
Let $L$ be a finite dimensional pure nilpotent Lie algebra with ${\rm dim}(L) \\ = n+4, n \geq 1$, $b(L) =2$, ${\rm dim}[L, L] =2$ and  ${\rm dim}Z(L) =1$. Then 
$L={\rm span}\{x_1,x_2,x_3,$ $z, z_1,\cdots, z_n\}$ with the nonzero products given by:
\begin{enumerate}
\item $n$ even: $[x_1, x_2] = x_3, [x_1, x_3] = z, [z_i, z_{i+1}] = z, i = 1, 3, 5, \cdots , n-1$.
\item $n$ odd: $[x_1, x_2] = x_3, [x_1, x_3] = z, [x_2, z_1] = z, [z_{i+1}, z_{i+2}] = z, \\ i = 1, 3, 5, \cdots , n-2$.
\end{enumerate} 
\end{thm}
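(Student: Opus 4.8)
The plan is to read off the structure of $L$ from its lower central series, then prove the decisive fact that the ``$x_3$-component'' of the bracket is an alternating form of rank exactly two, and finally reduce the remaining $n$ coordinates to a normal form in which the parity of $n$ produces the two listed cases. Since $L$ is pure, $Z(L)\subseteq[L,L]$, so with ${\rm dim}\,Z(L)=1$ and ${\rm dim}[L,L]=2$ I fix $z$ spanning $Z(L)$ and pick $x_3\in[L,L]\setminus{\rm span}\{z\}$, giving $[L,L]={\rm span}\{x_3,z\}$. A short descent shows $L^3=[L,[L,L]]=[L,x_3]$ is nonzero (otherwise $[L,L]\subseteq Z(L)$, impossible as $2>1$) and, being strictly inside the two-dimensional $L^2$, is one-dimensional; hence $L^4=0$, so $L^3=Z(L)={\rm span}\{z\}$ and $[L,x_3]={\rm span}\{z\}$. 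Writing each bracket as $[a,b]=\varphi(a,b)\,x_3+\psi(a,b)\,z$ defines alternating forms, and since $[a,z]=0$ and $[a,x_3]\in{\rm span}\{z\}$, the form $\varphi$ descends to an alternating form on $\bar V=L/[L,L]$ of dimension $n+2$.

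The crux is the claim ${\rm rank}\,\varphi=2$. It is nonzero, else $[L,L]\subseteq{\rm span}\{z\}$. The main obstacle is excluding rank $\ge 4$, which I would handle by a Jacobi computation: if $\varphi$ had two independent hyperbolic pairs $(a_1,a_2),(a_3,a_4)$, each bracketing to $x_3$ modulo $z$ with all cross-brackets lying in the central line, then expanding $[a_i,x_3]=[a_i,[a_j,a_k]]$ by the Jacobi identity and using that every cross-bracket lies in ${\rm span}\{z\}$ yields $[a_i,x_3]=0$ for each symplectic generator; the same expansion shows any element central modulo $Z(L)$ commutes with $x_3$. As these exhaust $L$, this forces $x_3\in Z(L)$, contradicting $x_3\notin{\rm span}\{z\}$. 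Thus $\varphi$ has a single hyperbolic pair $\bar x_1,\bar x_2$ with $[x_1,x_2]\equiv x_3\pmod z$, and absorbing the $z$-term gives $[x_1,x_2]=x_3$. The identical Jacobi identity shows that radical vectors of $\varphi$ commute with $x_3$, so $[L,x_3]={\rm span}\{z\}$ is produced only by $x_1,x_2$; after a linear change mixing $x_1,x_2$ and a rescaling of $z$ I arrange $[x_1,x_3]=z$, $[x_2,x_3]=0$, recovering the four-dimensional core of Theorem \ref{dim4}.

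Next I would take a lift $W\subset L$ of the radical of $\varphi$ (dimension $n$); adding multiples of $x_3$ normalizes $[w,x_1]=0$ for $w\in W$, while every such $w$ already commutes with $x_3$. The residual data is a functional $\gamma$ (via $[w,x_2]=\gamma(w)z$) and an alternating form $c$ (via $[w,w']=c(w,w')z$) on $W$, subject to the constraint that no nonzero $w\in W$ is central, i.e. $\ker\gamma\cap{\rm rad}(c)=0$. I may act on $(\gamma,c)$ by $GL(W)$ (rechoosing the basis of $W$) and by $\gamma\mapsto\gamma+c(\cdot,w_0)$ (replacing $x_2$ by $x_2+w_0$, which preserves all core relations). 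The constraint makes $\gamma|_{{\rm rad}(c)}$ injective, so ${\rm dim}\,{\rm rad}(c)\le 1$. If ${\rm rad}(c)=0$ then $c$ is nondegenerate, $n$ is even, the coboundary move kills $\gamma$, and a symplectic basis for $c$ gives the pairs $[z_i,z_{i+1}]=z$ of case $(1)$. If ${\rm dim}\,{\rm rad}(c)=1$ then $n$ is odd; killing $\gamma$ on the nondegenerate part and scaling leaves $\gamma$ supported on a single radical vector $z_1$ with $[z_1,x_2]=z$, the remaining even-dimensional symplectic part supplying $[z_{i+1},z_{i+2}]=z$, which is case $(2)$. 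A final sign normalization of $z$ matches the stated brackets.

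In summary, all Jacobi relations hold automatically because the construction produces an adapted basis of the given $L$; the only genuinely hard step is the rank-two lemma of the second paragraph, and once the symplectic normal form and the two elementary moves on $(\gamma,c)$ are in place the remainder is bookkeeping.
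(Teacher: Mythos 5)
Your proposal is correct, and it reaches the normal form by a genuinely different route than the paper. The paper's proof works with the codimension-one centralizer $C_L(L^2)$: it picks $x\notin C_L(L^2)$, normalizes $[x,v]=u$, $[x,y]=v$, invokes the breadth-one classification (Theorem \ref{one-class}) applied to $L/Z(L)$ to force all remaining brackets into $Z(L)$, and then packages the residual structure as a single alternating form on the breadth-one subalgebra $N={\rm span}\{y,u,w_1,\dots,w_n\}$, the parity dichotomy being governed by whether $y$ lies in the radical of that form (equivalently $\dim Z(N)=1$ or $2$). You instead encode the whole bracket in the two component forms $\varphi,\psi$ relative to the basis $\{x_3,z\}$ of $[L,L]$, prove the crucial fact that ${\rm rank}\,\varphi=2$ by a direct Jacobi-identity computation (two disjoint hyperbolic pairs would force $x_3\in Z(L)$), and then split the residual data into a functional $\gamma$ and an alternating form $c$ on the $n$-dimensional radical, with the parity dichotomy coming from $\dim{\rm rad}(c)\le 1$; your pair $(\gamma,c)$ is exactly the restriction of the paper's form on $N$ to the decomposition $N={\rm span}\{y\}\oplus{\rm span}\{u\}\oplus W$. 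Two things your route buys: first, it is self-contained and makes explicit the point the paper glosses over, namely why the form on $L/Z(L)$ has only one hyperbolic pair (the paper's appeal to Theorem \ref{one-class} produces some symplectic basis but does not by itself rule out $k\ge 2$; your Jacobi lemma does, and it shows the breadth hypothesis is not needed beyond $\dim[L,L]=2$, consistent with Theorem \ref{char}); second, you actually carry the case analysis to the two stated bracket tables, whereas the paper's printed proof stops after defining the form on $N$ and noting $w_i\notin Z(N)$, leaving the final symplectic bookkeeping and the parity split unwritten. The only blemishes in your write-up are trivial: the final "sign normalization of $z$" should instead be a rescaling of the radical vector $z_1$ (negating $z$ would disturb $[x_1,x_3]=z$), and the change of pair achieving $[x_2,x_3]=0$ should be spelled out as $x_2\mapsto x_2-(\beta/\alpha)x_1$ followed by rescaling $z$; both are routine.
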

\begin{proof}
Since $L$ is a pure nilpotent Lie algebra, we have $Z(L) \subseteq [L, L] = L^2$ and $L^3 = [L, [L, L]] \subsetneq L^2$.  Therefore, ${\rm dim}(L^3) \leq 1$. Since $L^2 \not\subseteq Z(L)$, we have $L^3 \neq 0$. Hence $L^3 = Z(L)$ since $L^3 \cap Z(L)\neq \{0\}$. Choose a basis $\{u\}$ for $Z(L)$ and extend it to a basis $\{v, u\}$ for $L^2$. For any $y \in L$, we have ${\rm ad}_y|_{L^2} : L^2 \longrightarrow L^3 = Z(L)$. Since ${\rm dim}Z(L) =1$ and $[v, u] = 0$, the subspace ${\rm span}\{{\rm ad}_y|_{L^2} \mid y \in L\}$ of $gl(L^2)$ is one dimensional. Hence $C_L(L^2)$ which is the kernel of the map ${\rm ad}: y \longrightarrow {\rm ad}_y|_{L^2} $ has codimension $1$ in $L$. Choose $x \in L \setminus C_L(L^2)$. Then $L = {\rm span}\{x\} \oplus C_L(L^2)$. Since $ 0 \neq [x, v] \in Z(L)$, scaling $u$ if necessary we can assume that $[x, v] = u$. For $v_1, v_2 \in C_L(L^2)$, we have $[x, [v_1, v_2]] = 0$ by Jacobi identity, which implies that  $[C_L(L^2), C_L(L^2)] \subseteq Z(L)$. Since $v \in L^2$,  there exists $y \in C_L(L^2)$ such that $[x, y] = v + cu$ for some $c \in F$.
Replacing $y$ by $y-cv$, we can and do assume that $[x, y] = v$. Since ${\rm dim}[L, L] =2, {\rm dim}Z(L) =1$, we have ${\rm dim}[L/Z(L), L/Z(L)] =1$. Hence $b(L/Z(L)) = 1$ since $L$ is pure. Therefore, applying Theorem \ref{one-class} to $L/Z(L)$, we have a basis $\{x, y, v, u, w_1, \cdots , w_n\}$ for $L$ with $[x, y] = v, [x, v] = u$ and all other brackets of the basis elements in $Z(L)$. Observe that $[y, v] = [v, w_i] = 0, 1 \leq i \leq n$ since $y, w_i \in C_L(L^2)$. Also $[x, w_i ] = c_iu, c_i \in F, 1 \leq i \leq n$. Therefore, replacing $w_i$ with $w_i - c_iv$ we can assume that $[x, w_i] = 0, 1 \leq i \leq n$.

Consider the subalgebra $N = {\rm span}\{y, u, w_1, \cdots , w_n\}$ of $L$. Since $[N, N] \subseteq Z(L) = {\rm span}\{u\}$, we have $b(N) \leq 1$. If $b(N) = 0$, then $w_i \in Z(L), 1 \leq i \leq n$ since $[x, w_i] = 0 = [v, w_i]$. This is impossible since ${\rm dim}Z(L) =1$. Therefore, $b(N) = 1$ which implies $[N, N] = Z(L)$. Define the bilinear form $\varphi : N \times N \longrightarrow F$ by $\varphi(a, b) = \alpha$ if $[a, b] = \alpha u$. Then $\varphi$ is an alternating form. Note that $w_i \not\in Z(N)$ since $w_i \not\in Z(L), 1 \leq i \leq n$.
\end{proof}
The last case (see Theorem \ref{char}) is when $L$ is a finite dimensional pure nilpotent Lie algebra with $b(L) = 2$, and ${\rm dim}Z(L) =2 = {\rm dim}[L, L]$. Since $L$ is pure we have $Z(L) = [L, L]$. In this case we determine the isomorphism classes for ${\rm dim}(L) = 5, {\rm and}\, 6$ below. 
\begin{thm} Let $L$ be a $5$-dimensional nilpotent pure Lie algebra with $b(L) = 2, {\rm dim}Z(L) =2 = {\rm dim}[L, L]$. Then 
$L={\rm span}\{x_1,x_2,x_3,z_1,z_2\}$ with the nonzero products given by $[x_1, x_2] = z_1, [x_1, x_3] = z_2$.
\end{thm}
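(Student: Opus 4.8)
The plan is to reduce the statement to a normal-form problem for a pair of alternating bilinear forms on a $3$-dimensional space. Since $L$ is pure we have $Z(L) = [L,L]$, and by hypothesis this is $2$-dimensional; as $\dim L = 5$, the quotient $L/Z(L)$ is $3$-dimensional, and it is abelian because $[L,L] = Z(L)$. First I would fix a vector-space complement $V$ of $Z(L)$ in $L$, so $\dim V = 3$, together with a basis $\{z_1', z_2'\}$ of $Z(L)$. Because $Z(L)$ is central, the Lie bracket is completely encoded by its restriction to $V\times V$, which I would write as $[u,v] = \beta_1(u,v)\,z_1' + \beta_2(u,v)\,z_2'$ for two alternating bilinear forms $\beta_1,\beta_2$ on $V$. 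The condition $\dim[L,L] = 2$ forces $\beta_1$ and $\beta_2$ to be linearly independent, for otherwise every bracket would lie in a single line of $Z(L)$.

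The key linear-algebra input is that an alternating form on a $3$-dimensional space has even rank, hence rank $0$ or $2$, so each nonzero form in the pencil spanned by $\beta_1,\beta_2$ has a $1$-dimensional radical. I would set $\langle a\rangle = {\rm rad}(\beta_1)$ and $\langle b\rangle = {\rm rad}(\beta_2)$. The crucial observation is that $a$ and $b$ are linearly independent: if the two radicals coincided, both forms would descend to the $2$-dimensional quotient $V/\langle a\rangle$, where all alternating forms are proportional, contradicting the independence of $\beta_1,\beta_2$. Thus $W := \langle a,b\rangle$ is genuinely $2$-dimensional, and I would choose any $x_1 \in V\setminus W$ to complete $\{x_1,a,b\}$ to a basis of $V$.

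The final step is to put $x_3 = a$ and $x_2 = b$ and read off the brackets. Because $a\in{\rm rad}(\beta_1)$ and $b\in{\rm rad}(\beta_2)$, three of the coefficients vanish automatically: $\beta_1(x_1,x_3)=0$, $\beta_2(x_1,x_2)=0$, and $\beta_1(x_2,x_3)=\beta_2(x_2,x_3)=0$, so that $[x_1,x_2]$ is a multiple of $z_1'$, $[x_1,x_3]$ is a multiple of $z_2'$, and $[x_2,x_3]=0$. It remains to see that these multiples are nonzero: since $\beta_1$ has radical exactly $\langle a\rangle$, its restriction to the complementary plane $\langle x_1, b\rangle$ is nondegenerate, so $\beta_1(x_1,b)\ne 0$; symmetrically $\beta_2(x_1,a)\ne 0$. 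Rescaling $z_1',z_2'$ by these nonzero scalars yields a basis $z_1,z_2$ of $Z(L)$ with $[x_1,x_2]=z_1$, $[x_1,x_3]=z_2$, and all other brackets zero, which is exactly the asserted form.

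I expect the main obstacle to be pinning down the right basis — specifically, recognizing that the two radicals are distinct and that using them simultaneously as $x_2$ and $x_3$ is what both forces $[x_2,x_3]=0$ and \emph{diagonalizes} the remaining two brackets; the nondegeneracy argument guaranteeing that the surviving coefficients are nonzero is the remaining technical point. One could equivalently phrase the whole argument through decomposability of $2$-forms on a $3$-dimensional space and the fact that the two associated planes in $V^\ast$ must meet in a common line, but the radical formulation stays closer to the ${\rm ad}$-kernel computations used earlier in the paper.
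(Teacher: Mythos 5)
Your proof is correct, but it takes a genuinely different route from the paper's. The paper's argument is a short explicit computation that leans directly on the breadth hypothesis: it picks an element $x$ with $b(x)=2$, chooses $y_1,y_2$ with $[x,y_1]=z_1$, $[x,y_2]=z_2$ spanning $Z(L)$, writes $[y_1,y_2]=a_1z_1+a_2z_2$, and then kills this last bracket by the substitution $x_2=y_1-a_2x$, $x_3=y_2+a_1x$ (a one-line check using $[x,y_1]=z_1$, $[x,y_2]=z_2$). You instead work coordinate-free: you encode the algebra as a pair of alternating forms $\beta_1,\beta_2$ on the $3$-dimensional space $V\cong L/Z(L)$, show the forms are linearly independent, and locate the basis via the two radicals, with the key observation that the radicals are distinct lines (else both forms would descend to a plane and be proportional). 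In the end the bases produced are the same in structure --- your $x_2=b$ and $x_3=a$ lie in ${\rm rad}(\beta_2)$ and ${\rm rad}(\beta_1)$ respectively, exactly as the paper's adjusted $x_2,x_3$ do --- but the ideas locating them differ. The paper's proof buys brevity and stays elementary; yours buys conceptual clarity, makes visible that the hypothesis $b(L)=2$ is never needed (it is already implied by ${\rm dim}[L,L]=2$ via the paper's Theorem \ref{char}), and sets up precisely the pencil-of-alternating-forms framework that becomes essential in the harder $6$-dimensional case of Theorem \ref{6-dim}, where the analogous pencil need not admit such a clean simultaneous normal form. All the linear-algebra facts you invoke (even rank of alternating forms, nondegeneracy of the restriction to a complement of the radical) are standard and correctly applied, so there is no gap.
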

\begin{proof}
Let $x \in L$ such that $b(x) = 2$. Then there exists linearly independent vectors $y_1, y_2, z_1, z_2 \in L$ such that $[x, y_1] = z_1, [x, y_2] = z_2$. Since $[L, L] = Z(L)$, and ${\rm dim}Z(L) =2$, we have $Z(L) = {\rm span}\{z_1,z_2\}$. Suppose $[y_1, y_2] = a_1z_1 + a_2z_2$ for some $a_1, a_2 \in F$. Set $x_1 = x, x_2 =y_1 - a_2x, x_3 = y_2 + a_1x$. Then clearly we have $[x_1, x_2] = z_1, [x_1, x_3] = z_2, [x_2, x_3] = 0$ and $L={\rm span}\{x_1,x_2,x_3,z_1,z_2\}$ completing the proof.
\end{proof}
\begin{thm}\label{6-dim} Let $L$ be a $6$-dimensional nilpotent pure Lie algebra with $b(L) = 2, {\rm dim}Z(L) =2 = {\rm dim}[L, L]$. Then 
$L={\rm span}\{x_1,x_2,x_3,x_4, z_1,z_2\}$ with the nonzero products given by
\begin{enumerate}
\item $[x_1, x_2] = z_1, [x_3, x_4] = z_2$, or
\item $[x_1, x_2] = z_1, [x_2, x_3] = z_2, [x_3, x_4] = z_1, [x_1, x_4] = \alpha z_2$, for some $\alpha \in F$.
\end{enumerate} 
\end{thm}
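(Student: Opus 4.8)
The plan is to recast the classification as linear algebra of a pencil of alternating forms and then reduce everything to a single self-adjoint operator. Since $L$ is pure with ${\rm dim}\,Z(L) = {\rm dim}[L,L] = 2$, we have $[L,L] = Z(L)$, so $L$ is $2$-step nilpotent and the bracket descends to an alternating bilinear map $\beta\colon V\times V\to W$, where $V = L/Z(L)$ (dimension $4$) and $W = Z(L)$ (dimension $2$). Fixing a basis $z_1,z_2$ of $W$ turns this into an ordered pair of alternating forms $\beta_1,\beta_2$ on $V$, and recovering $L$ amounts to choosing a basis of $V$ together with the $z_i$. The admissible coordinate changes are congruence by $g\in {\rm GL}(V)$ (applied to $\beta_1,\beta_2$ simultaneously) and replacing $(z_1,z_2)$ by another basis of $W$, which recombines the pencil $\{s\beta_1+t\beta_2\}$. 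In these terms the hypotheses read: the common radical $\{v\in V : \beta_1(v,\cdot)=\beta_2(v,\cdot)=0\}$ is zero (this is exactly $Z(L)=[L,L]$, since a nonzero common radical vector lifts to a central element lying outside $[L,L]$), and $\beta_1,\beta_2$ are linearly independent (this is ${\rm dim}[L,L]=2$). The goal is to move the pencil to one of the two stated normal forms.

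First I would produce a nondegenerate member of the pencil. The Pfaffian ${\rm Pf}(s\beta_1+t\beta_2)$ is a binary quadratic in $(s,t)$; if it vanishes identically then every member has rank at most $2$ and is hence decomposable, say $\beta_1=a_1\wedge a_2$ and $\beta_2=b_1\wedge b_2$ in $\wedge^2 V^*$. Vanishing of the mixed (cross) term then forces the planes $\langle a_1,a_2\rangle$ and $\langle b_1,b_2\rangle$ in the $4$-dimensional space $V^*$ to intersect, so their span has dimension at most $3$ and its annihilator, which is precisely the common radical, is nonzero, a contradiction. Hence some combination is nondegenerate, and after a change of basis of $W$ I may assume $\beta_1$ is symplectic. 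I would then replace the pencil by the operator $\Phi\in{\rm End}(V)$ defined by $\beta_2(u,v)=\beta_1(\Phi u,v)$; antisymmetry of $\beta_2$ makes $\Phi$ self-adjoint for $\beta_1$, i.e. $\beta_1(\Phi u,v)=\beta_1(u,\Phi v)$. The residual freedom is conjugation of $\Phi$ by ${\rm Sp}(\beta_1)$ together with the shear $z_2\mapsto z_2-\lambda z_1$, which sends $\Phi\mapsto\Phi-\lambda I$.

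The heart of the argument is to show, using ${\rm char}(F)\neq 2$ throughout, that the minimal polynomial of $\Phi$ has degree $2$. I would establish three facts. (i) Primary components for distinct irreducible factors of the characteristic polynomial are $\beta_1$-orthogonal, hence each is $\beta_1$-nondegenerate and even-dimensional. (ii) A self-adjoint operator on a $2$-dimensional symplectic space must be scalar, since $\wedge^2$ of a $2$-dimensional space is $1$-dimensional; this rules out a $2$-dimensional component with irreducible quadratic characteristic polynomial and also kills every nonzero self-adjoint nilpotent in dimension $2$. (iii) The remaining indecomposable types (a single Jordan block, the type $[3,1]$, the rank-one nilpotent $[2,1,1]$, and an irreducible quadratic block with minimal polynomial $q^2$) cannot occur: in each a short computation forces a distinguished vector (the bottom of the longest string) into the radical of $\beta_1$, or forces the nilpotent part to vanish, contradicting nondegeneracy. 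Combining (i)--(iii), $\Phi$ is neither scalar (which would give ${\rm dim}[L,L]=1$) nor of minimal degree $3$ or $4$, so its minimal polynomial is a quadratic. I expect this operator classification to be the main obstacle; everything before and after it is comparatively routine.

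Finally, writing the minimal polynomial as $t^2+bt+c$ and replacing $\Phi$ by $\Phi+\tfrac{b}{2}I$ (a shear, where ${\rm char}(F)\neq 2$ is used again) I would arrange $\Phi^2=-\alpha I$ for some $\alpha\in F$. If $-\alpha$ is a nonzero square then $\Phi$ has two distinct eigenvalues, each of multiplicity $2$, whose $\beta_1$-nondegenerate eigenplanes carry independent directions of $W$; this exhibits $L$ as a direct sum of two $3$-dimensional Heisenberg algebras, i.e. normal form (1). This is precisely the reducible case, in agreement with Proposition \ref{reducible}. Otherwise ($-\alpha$ not a square, including $\alpha=0$) I would build a symplectic basis $x_1,x_2,x_3,x_4$ adapted to $\Phi$: when $\alpha\neq 0$ by taking $x_3=\alpha^{-1}\Phi x_1$ and extending across a complementary $\Phi$-invariant Lagrangian, and when $\alpha=0$ directly from the $[2,2]$ structure. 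Reading off $\beta_1,\beta_2$ in this basis yields exactly the brackets $[x_1,x_2]=z_1$, $[x_2,x_3]=z_2$, $[x_3,x_4]=z_1$, $[x_1,x_4]=\alpha z_2$ of normal form (2), completing the proof.
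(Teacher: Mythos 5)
Your proposal is correct, but it takes a genuinely different route from the paper's. You linearize the problem: since $[L,L]=Z(L)$, the bracket becomes a pencil of alternating forms $\beta_1,\beta_2$ on the $4$-dimensional space $V=L/Z(L)$; the Pfaffian argument produces a nondegenerate member, and the whole classification is pushed into the similarity theory of the $\beta_1$-self-adjoint operator $\Phi$ defined by $\beta_2(u,v)=\beta_1(\Phi u,v)$, for which you show the minimal polynomial has degree at most two and then split on whether $-\alpha$ is a nonzero square. The paper instead works inside the Lie algebra: it disposes of the decomposable case via Proposition \ref{reducible}, then splits on whether $L$ contains an element of breadth one, and in that case builds $x_1,x_2,x_3,x_4$ by hand from centralizers (using $C_L(A)$ for $A={\rm span}\{x_1,x_2\}$). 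Notably, the paper's printed proof is incomplete: its Case (i) stops at $[x_1,x_4]=0=[x_2,x_4]$ and $0\neq[x_3,x_4]\in Z(L)$ without normalizing to the stated brackets, and its Case (ii) (no breadth-one element) is a single sentence with no argument. Your method covers both cases uniformly: a breadth-one element is exactly a $\Phi$-eigenvector in $V$, so the paper's Case (ii) is precisely your case where $-\alpha$ is not a square --- the same dichotomy the paper's subsequent corollary exploits over algebraically closed fields. Thus your approach buys completeness, field-independence, and a conceptual meaning for the parameter $\alpha$ (its class modulo squares), at the price of importing the theory of pairs of alternating forms; the paper's approach, if finished, is more elementary and stays in Lie-theoretic language. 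One detail to watch when you write yours out: for the elementary divisor $q^2$ ($q$ an irreducible quadratic), the ``bottom of the longest string lies in the radical'' mechanism does not apply literally, since $q(\Phi)V$ is a Lagrangian plane rather than radical; that case is killed by inducing the alternating form $\beta_1(q(\Phi)u,v)$ on the $2$-dimensional quotient $V/q(\Phi)V$ and applying your fact (ii), which is your alternative mechanism (forcing the nilpotent part to vanish), so this is a matter of detail rather than a gap.
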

\begin{proof}
If $L$ is a direct sum of nontrivial ideals then it follows from Proposition \ref{reducible} that  $L={\rm span}\{x_1,x_2,x_3,x_4, z_1,z_2\}$ with the nonzero products given by $(1)$. Now assume that $L$ cannot be written as direct sum of nontrivial ideals. Then we have two cases: (i) $L$ contains an element of breadth one, (ii) $L$ does not contain an element of breadth one.

Case (i): Suppose $x_1 \in L$ and $b(x_1) = 1$. Then there exist $x_2 \in L$ such that $[x_1, x_2] = z_1 \in Z(L)$ since $[L, L] = Z(L)$. Suppose $b(x_2) =1$. Then both ${\rm ker} \, {\rm ad}_{x_1}$ and ${\rm ker} \, {\rm ad}_{x_2}$ have codimension $1$ in $L$.  
Set $A = {\rm span}\{x_1, x_2\}$. Then $C_L(A) = {\rm ker} \, {\rm ad}_{x_1} \cap {\rm ker} \, {\rm ad}_{x_2}$ has codimension $2$ in $L$. So ${\rm dim} C_L(A) = 4$ and $Z(L) \subsetneq C_L(A)$. Choose $y_1, y_2 \in L$ such that $C_L(A)= Z(L) + {\rm span}\{y_1, y_2\}$. Since $b(L) =2$ and $[L, L] = Z(L)$ we have $[y_1, y_2] = z_2 \in Z(L) \setminus {\rm span}\{z_1\}$. Set $I = {\rm span}\{x_1, x_2, z_1\}$ and  $J = {\rm span}\{y_1, y_2, z_2\}$. Since $L = A \oplus C_L(A)$, we have $I , J$  ideals of $L$ and $L = I \oplus J$ which is a contradiction. Hence $b(x_2) = 2$. Thus ${\rm ker} \, {\rm ad}_{x_2}$ has codimension $2$ in $L$, so $C_L(A)$ has codimension $3$ in $L$ and $C_L(A)$ has codimension $2$ in ${\rm ker} \, {\rm ad}_{x_1}$. Choose $0 \neq x_3 \in {\rm ker} \, {\rm ad}_{x_1}$ so that ${\rm ker} \, {\rm ad}_{x_1} = C_L(A) + {\rm span}\{x_1, x_3\}$. Then $[x_1, x_3] = 0$ and $[x_2, x_3] \neq 0$. Suppose $[x_2, x_3] = \alpha z_1$ for some $0 \neq \alpha \in F$. Then $x_3 + \alpha x_1 \in C_L(A)$ which is a contradiction. Hence $[x_2, x_3] = z_2 \in Z(L)$ and $Z(L) = {\rm span}\{z_1, z_2\}$. Since $Z(L)$ has codimension $1$ in $C_L(A)$, there exists $x_4 \in C_L(A)$ such that $C_L(A) = Z(L) + {\rm span}\{x_4\}$. Thus $[x_1, x_4] = 0 = [x_2, x_4]$ and $0 \neq [x_3, x_4] \in Z(L)$. 

Case (ii): For all $x \in L \setminus Z(L)$ we have $b(x) =2$.
\end{proof}
For $\alpha \in F$, let us denote $L_{\alpha} ={\rm span}\{x_1,x_2,x_3,x_4, z_1,z_2\}$ to be the $6$-dimensional nilpotent pure Lie algebra with the nonzero products  given in  Theorem \ref{6-dim} (2). As given in (\cite{D}, page  647), for $\alpha , \beta \in F$, the Lie algebras $L_{\alpha}$ and $L_{\beta}$ are isomorphic if $\alpha = \gamma^2\beta$ for some $0 \neq \gamma \in F$. However, as the following corollary shows, this is not a necessary condition. 
\begin{cor} Let $L$ be a $6$-dimensional nilpotent pure Lie algebra over an algebraically closed field $F$ with $b(L) = 2, {\rm dim}Z(L) =2 = {\rm dim}[L, L]$. Then 
$L={\rm span}\{x_1,x_2,x_3,x_4, z_1,z_2\}$ with the nonzero products given by
\begin{enumerate}
\item $[x_1, x_2] = z_1, [x_3, x_4] = z_2$, or
\item $[x_1, x_2] = z_1, [x_2, x_3] = z_2, [x_3, x_4] = z_1$.
\end{enumerate} 
\end{cor}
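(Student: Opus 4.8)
The plan is to start from Theorem~\ref{6-dim}, which already shows that over an arbitrary field of characteristic $\neq 2$ such an algebra $L$ is isomorphic either to the algebra in case~(1) or to $L_\alpha$ for some $\alpha \in F$. Thus the corollary amounts to understanding what algebraic closure buys us, namely the collapse of the one-parameter family $\{L_\alpha\}$. I would prove it by showing that whenever $\alpha \neq 0$ the algebra $L_\alpha$ is decomposable, so that by Proposition~\ref{reducible} it is a direct sum of two Heisenberg ideals and hence isomorphic to the algebra in case~(1); the remaining value $\alpha = 0$ is exactly case~(2).

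To locate the decomposition I would read the bracket as a pencil of alternating forms on the four-dimensional quotient $L/Z(L)$: writing $[u,v] = \omega_1(u,v)\,z_1 + \omega_2(u,v)\,z_2$, a short computation in the basis $\{x_1,x_2,x_3,x_4\}$ gives that the Pfaffian of $\lambda\omega_1 + \mu\omega_2$ equals $\lambda^2 + \alpha\mu^2$. For $\alpha \neq 0$ this binary form has two distinct roots $\lambda/\mu = \pm\delta$ with $\delta^2 = -\alpha$, and this is the only point at which the algebraic closedness of $F$ is used, to ensure $\delta \in F$. At either root the corresponding member of the pencil drops to rank two, and its two-dimensional kernel cuts out a plane in $L/Z(L)$.

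Carrying this out, I would take $u_1 = x_1 + \delta x_3$, $u_2 = \delta x_2 + x_4$ spanning the kernel of $\delta\omega_1 + \omega_2$, and $u_3 = x_1 - \delta x_3$, $u_4 = -\delta x_2 + x_4$ spanning the other kernel; since $\delta \neq 0$ and ${\rm char}(F) \neq 2$ these four vectors form a basis of $L/Z(L)$. A direct evaluation of the brackets of $L_\alpha$ yields $[u_1,u_2] = 2\delta z_1 + 2\alpha z_2$, $[u_3,u_4] = -2\delta z_1 + 2\alpha z_2$, and the vanishing of all four cross brackets $[u_i,u_j]$ with $i \in \{1,2\}$ and $j \in \{3,4\}$. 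Setting $w_1 = 2\delta z_1 + 2\alpha z_2$ and $w_2 = -2\delta z_1 + 2\alpha z_2$, which are linearly independent because $\alpha\delta \neq 0$, one obtains $L_\alpha = {\rm span}\{u_1,u_2,w_1\} \oplus {\rm span}\{u_3,u_4,w_2\}$ as a direct sum of two three-dimensional Heisenberg ideals, and Proposition~\ref{reducible} finishes the case.

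The calculations here are entirely routine; the only real content is recognizing the Pfaffian $\lambda^2 + \alpha\mu^2$ as the governing invariant and using its factorization to produce the two kernel planes. The point to emphasize is that the splitting breaks down precisely when $\alpha = 0$: then the Pfaffian $\lambda^2$ has a repeated root, the pencil never presents two independent kernels, and $L_0$ remains indecomposable. This is exactly why case~(2) persists as a genuinely distinct isomorphism class rather than being absorbed into case~(1), which I expect to be the one subtle point worth flagging in the write-up.
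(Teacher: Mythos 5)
Your proposal is correct, and it takes a genuinely different --- and in fact sounder --- route than the paper. The paper's own proof also reduces to the family $L_\alpha$ of Theorem \ref{6-dim}(2), but it then exhibits the breadth-one element $y=\sqrt{-\alpha}\,x_2+x_4$ and cites Case (i) of the proof of Theorem \ref{6-dim} to conclude that $L_\alpha\cong L_0$ for every $\alpha$. You instead show that for $\alpha\neq 0$ the algebra $L_\alpha$ decomposes into the two Heisenberg ideals ${\rm span}\{u_1,u_2,w_1\}$ and ${\rm span}\{u_3,u_4,w_2\}$, hence is isomorphic to the algebra in case (1) by Proposition \ref{reducible}, leaving $L_0$ as case (2). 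These intermediate conclusions are incompatible, since case (1) and $L_0$ are non-isomorphic (your Pfaffian invariant: $\lambda\mu$ has two distinct roots while $\lambda^2$ has a double root, and the degenerate locus of the pencil is preserved under isomorphism) --- and it is your conclusion that is right: your brackets check out ($[u_1,u_2]=2\delta z_1+2\alpha z_2$, $[u_3,u_4]=-2\delta z_1+2\alpha z_2$, all four cross brackets zero, and $w_1,w_2$ independent exactly because $8\alpha\delta\neq 0$ in characteristic $\neq 2$), so $L_\alpha$ with $\alpha\neq 0$ really is decomposable, whereas Case (i) of Theorem \ref{6-dim} applies only to algebras that are not direct sums of nontrivial ideals, a hypothesis the paper never verifies for $L_\alpha$ and which in fact fails. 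Both routes still deliver the corollary as stated, because either way $L_\alpha$ lands on the two-item list; but yours assigns $L_\alpha$ ($\alpha\neq 0$) to the correct class, isolates the single use of algebraic closure (the existence of $\delta=\sqrt{-\alpha}$), and your closing observation that the repeated root at $\alpha=0$ obstructs any splitting is precisely what shows cases (1) and (2) are genuinely distinct isomorphism classes --- a point on which the paper's proof, and its preceding remark that de Graaf's criterion ``is not a necessary condition,'' is actually in error.
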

\begin{proof}
It suffices to prove that for any $\alpha \in F$, $L_{\alpha} ={\rm span}\{x_1,x_2,x_3,x_4, z_1,z_2\}$ with the nonzero products  given in  
Theorem \ref{6-dim}(2) is isomorphic to $L_0$ when $F$ is algebraically closed. Since $F$ is algebraically closed $\sqrt{-\alpha} \in F$. Consider the vector $y = \sqrt{-\alpha} x_2 + x_4 \in L_{\alpha}$. Then ${\rm im}\, {\rm ad}_y = {\rm span}\{z_1 - \sqrt{-\alpha}z_2\}$. Hence $b(y) =1$. Since $L_{\alpha}$ contains an element of breadth one, as shown in the proof of Theorem \ref{6-dim} $L_{\alpha} \cong L_0$.
\end{proof}

\end{document}